\documentclass[10pts,twocolumn]{article}

\usepackage{graphicx}      % include this line if your document contains figures
\usepackage{natbib}        % required for bibliography
\usepackage{amsmath}
\usepackage{amssymb}
\usepackage{bm}
\usepackage{diagbox}
\usepackage{nicefrac}
\usepackage{placeins}
\usepackage{svg}
\usepackage{xcolor}

\usepackage{authblk}

\renewcommand{\atop}[2]{\genfrac{}{}{0pt}{}{#1}{#2}}

\newtheorem{theorem}{Theorem}
\newtheorem{proposition}[theorem]{Proposition}
\newtheorem{lemma}[theorem]{Lemma}
\newtheorem{remark}{Remark}
\newenvironment{proof}{%
\textit{Proof.}
}{%
\, \hfill $\square$
}

\begin{document}
%\begin{frontmatter}

\title{Discretization of the Burgers' equation as a port-Hamiltonian system}%\thanksref{footnoteinfo}} 
% Title, preferably not more than 10 words.

%\thanks[footnoteinfo]{Sponsor and financial support acknowledgment
%goes here. Paper titles should be written in uppercase and lowercase
%letters, not all uppercase.}

\author[1]{Lorenzo Agostini} 
\author[1]{Michel Fournié} 
\author[1]{Ghislain Haine}

\affil[1]{Fédération ENAC ISAE-SUPAERO ONERA, Université de Toulouse, Toulouse, France (e-mail: ghislain.haine@isae-supaero.fr).}
%\address[Second]{Colorado State University, 
%   Fort Collins, CO 80523 USA (e-mail: author@lamar. colostate.edu)}
%\address[Third]{Electrical Engineering Department, 
%   Seoul National University, Seoul, Korea, (e-mail: author@snu.ac.kr)}

\date{}

\maketitle

\begin{abstract} % Abstract of 50--100 words
The numerical simulation of the inviscid Burgers' equation is often hindered by spurious oscillations near discontinuities. To mitigate this issue, a viscous term can be introduced, leading to the viscous Burgers' equation. In this work, port-Hamiltonian formulations for both the inviscid and the viscous Burgers' equations are proposed, enabling a representation that incorporates both convective and dissipative effects. Boundary control and observation are naturally handled within this framework. Applying a dedicated finite element method, a finite-dimensional port-Hamiltonian system is derived. The relationship between time step, spatial resolution, and viscosity required to achieve numerical stability is analyzed. Numerical experiments validate the effectiveness of the approach.

\textbf{Keywords:} Burgers' equation; port-Hamiltonian systems; structure-preserving discretization; finite element method; numerical stability.
\end{abstract}

%\end{frontmatter}
%===============================================================================

\vspace{-0.5em}
\section{Introduction}
\label{sec:introduction}
\vspace{-0.5em}

Port-Hamiltonian (pH) systems generalize classical Hamiltonian mechanics by introducing ports that account for energy exchanges with the environment and for energy loss due to dissipation. This structural extension guarantees passivity by construction. Moreover, pH systems are closed under interconnections that preserve power, making them particularly effective for the modular modeling of complex physical systems. From a control perspective, the intrinsic passivity of pH formulations supports the design of stable controllers using passivity-based techniques. A detailed introduction to the theory of pH systems can be found in~\cite{van_der_Schaft_2014}. A comprehensive review of developments in distributed pH systems over the last two decades is provided in~\cite{Rashad_2020}.

In this work, the pH formulation is applied to the Burgers' equation (BE), a fundamental non-linear partial differential equation. The BE is one of the seminal equations in fluid dynamics. It serves as a one-dimensional analogue of the Navier–Stokes equations in the pressureless limit and models non-linear wave propagation in incompressible media. Notably, the BE plays a central role in understanding the formation of shock waves, making it a widely adopted model far beyond its original hydrodynamic context.

The BE was introduced in 1939 (\cite{Burgers_1995}) who simplified the full Navier–Stokes equations by neglecting the pressure term. A comprehensive overview of the historical development and recent advances related to the BE can be found in~\cite{Enflo_2004,Bonkile_2018}.
While the inviscid Burgers' equation (IBE) captures key features of shock formation, its simulation may lead to nonphysical oscillations. To address this issue, a small viscosity term is introduced, motivated by classical numerical schemes commonly used, which tend to introduce artificial viscosity. This modification yields a model that retains the essential characteristics of the original system while being more amenable to stable numerical approximation. The pH formulation is then applied to the viscous Burgers' equation (VBE), which combines convective transport with diffusive effects. In this framework, viscous dissipation is modeled via a resistive port. Boundary ports are naturally incorporated, enabling the definition of boundary control and observation.

The paper is organized as follows. Section~\ref{sec:inviscid-model} introduces a pH formulation of the IBE, including the associated Dirac structure, weak formulation, and structure-preserving finite element discretization. The resulting discrete power balance is established, and numerical simulations highlight the limitations of the inviscid formulation when steep gradients develop. These observations motivate the introduction of viscosity, which is addressed in Section~\ref{sec:viscous-model}. There, the VBE is formulated within the pH framework by adding a distributed resistive port and the corresponding boundary ports, and a structure-preserving discretization is derived. Section~\ref{sec:simulations} presents numerical simulations illustrating the behavior of both inviscid and viscous models and validating the proposed formulation. Section~\ref{sec:stability} provides a parametric numerical study that investigates the influence of the spatial resolution, time-step scaling, and viscosity on the energetic stability of the scheme. Finally, Section~\ref{sec:conclusion} concludes the paper and outlines future works.

\vspace{-0.5em}
\section{Inviscid model}
\label{sec:inviscid-model}
\vspace{-0.5em}

\vspace{-0.5em}
\subsection{Power balance}
\vspace{-0.5em}

Consider the inviscid Burgers' equation (IBE)
$$
\partial_t v(t,x) + \partial_x \frac{v^2(t,x)}{2} = 0, \quad \forall t \ge 0, x \in (0,1),
$$
together with boundary conditions, where $v : \mathbb{R}^+ \times (0,1) \to \mathbb{R}$ is the velocity.

At the formation of a shock located at $x = x_s(t)$, we define the left limits
\begin{center}
$
v_l := \lim_{h \rightarrow 0^+} v(t, x_s(t) - h) \;, \; x_s^-(t) := \lim_{h \rightarrow 0^+} x_s(t) - h \;,
$
\end{center}
and the right limits
\begin{center}
$
v_r := \lim_{h \rightarrow 0^+} v(t, x_s(t) + h) \;, \; x_s^+(t) := \lim_{h \rightarrow 0^+} x_s(t) + h \;.
$
\end{center}
Furthermore, we recall that at a shock, $v_l>v_r$.
\begin{lemma}
\label{lem:kinetic-energy}
For smooth solutions of the IBE, the kinetic energy
$
E(t) := \int_0^1 \frac{v^2(t,x)}{2} \, \mathrm{d}x\;,
$
satisfies the power balance
{\small
$$
\frac{\mathrm{d}E}{\mathrm{d}t} = \frac{v^3(t,0) - v^3(t,1)}{3}\;.
$$}
After the formation of a shock, the time derivative of the kinetic energy becomes
{\small
\begin{equation}
\label{eq:power-balance}
\frac{\mathrm{d}E}{\mathrm{d}t} = \frac{v^3(t,0) - v^3(t,1)}{3} + \frac{(v_r - v_l)^3}{12}\;.
\end{equation}}
In other words, a dissipation occurs at shock (as $v_l>v_r$).
\end{lemma}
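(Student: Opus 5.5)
For smooth solutions the argument is a direct computation. We differentiate $E$ under the integral sign, use the IBE in the form $\partial_t v = -v\,\partial_x v$, and write
$$
\frac{\mathrm{d}E}{\mathrm{d}t} = \int_0^1 v\,\partial_t v \,\mathrm{d}x = -\int_0^1 v^2 \partial_x v \,\mathrm{d}x = -\int_0^1 \partial_x \frac{v^3}{3}\,\mathrm{d}x .
$$
The fundamental theorem of calculus then gives $\frac{v^3(t,0)-v^3(t,1)}{3}$.

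After a shock forms at $x=x_s(t)$, $v$ is smooth only on $(0,x_s(t))$ and $(x_s(t),1)$. We therefore split
$$
E(t)=\int_0^{x_s(t)}\frac{v^2}{2}\,\mathrm{d}x+\int_{x_s(t)}^1\frac{v^2}{2}\,\mathrm{d}x
$$
and differentiate each piece with the Leibniz rule, which produces moving-boundary terms. The first piece gives $\dot x_s\, v_l^2/2$ plus the interior integral, which by the smooth computation equals $\frac{v^3(t,0)-v_l^3}{3}$. Likewise the second piece gives $-\dot x_s\, v_r^2/2 + \frac{v_r^3 - v^3(t,1)}{3}$. Adding,
$$
\frac{\mathrm{d}E}{\mathrm{d}t} = \frac{v^3(t,0)-v^3(t,1)}{3} + \dot x_s\,\frac{v_l^2-v_r^2}{2} - \frac{v_l^3 - v_r^3}{3}.
$$

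The key input for the jump term is the Rankine–Hugoniot condition for the flux $v^2/2$, which holds for weak (entropy) solutions:
$$
\dot x_s = \frac{v_l^2/2 - v_r^2/2}{v_l - v_r} = \frac{v_l+v_r}{2}.
$$
Substituting it, the jump term becomes
$$
\frac{(v_l+v_r)(v_l^2-v_r^2)}{4} - \frac{v_l^3-v_r^3}{3}.
$$
Factor out $(v_l-v_r)$ and set $a=v_l$, $b=v_r$. The remaining factor is
$$
\frac{(a+b)^2}{4} - \frac{a^2+ab+b^2}{3} = \frac{3a^2+6ab+3b^2-4a^2-4ab-4b^2}{12} = -\frac{(a-b)^2}{12}.
$$
Hence the jump term equals $-\frac{(v_l-v_r)^3}{12} = \frac{(v_r-v_l)^3}{12}$, which is \eqref{eq:power-balance}. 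Since $v_l>v_r$, this term is negative, so the shock dissipates energy.

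The main obstacle is justifying the piecewise computation. It requires $v$ to be piecewise smooth with a single shock curve $x_s\in C^1$, and it requires invoking Rankine–Hugoniot, which is not derived in the paper. I would state these as standing assumptions, quoting Rankine–Hugoniot as the standard jump condition for weak solutions of conservation laws. With several shocks, the same computation applies on each one and the contributions sum.
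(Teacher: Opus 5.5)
Your proposal is correct and follows the paper's proof step for step. Both use $\partial_t(v^2/2) = -\partial_x(v^3/3)$ on each smooth piece, split the integral at $x_s(t)$ with Leibniz's rule, and substitute the Rankine--Hugoniot speed $x_s' = (v_l+v_r)/2$. Your explicit factorization of the jump term supplies the algebra that the paper compresses into ``substituting yields the result.''
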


\begin{proof}
The proof relies on the key fact that since $v$ strongly satisfies the IBE on both sides of the shock, it follows that $\partial_t (\nicefrac{v^2}{2}) = v \partial_t v = - v \partial_x (\nicefrac{v^2}{2}) = - v^2 \partial_x v = - \partial_x (\nicefrac{v^3}{3})$.

Splitting the integral at the shock yields
{\small
$$
E(t) = \int_0^{x_s(t)} \frac{v^2}{2} \, \mathrm{d} x + \int_{x_s(t)}^1 \frac{v^2}{2} \, \mathrm{d} x\;.
$$}
By applying Leibniz's rule, it follows that
\begin{center}
$
\frac{\mathrm{d} E}{\mathrm{d} t} = \int_0^{x_s(t)} \partial_t \left( \frac{v^2}{2} \right) \mathrm{d} x + \int_{x_s(t)}^1 \partial_t \left( \frac{v^2}{2} \right) \mathrm{d} x + \frac{v_l^2-v_r^2}{2} x_s'(t)\;.
$
\end{center}
Substituting $\partial_t (v^2/2)$ by $-\partial_x (v^3/3)$, one obtains
\begin{center}
$
\frac{\mathrm{d} E}{\mathrm{d} t} = -\int_0^{x_s(t)} \partial_x \left( \frac{v^3}{3} \right) \mathrm{d} x - \int_{x_s(t)}^1 \partial_x \left( \frac{v^3}{3} \right) \mathrm{d} x + \frac{v_l^2-v_r^2}{2} x_s'(t)\;.
$
\end{center}
Evaluating the integrals yields
{\small
\begin{align*}
\frac{\mathrm{d} E}{\mathrm{d} t} &= -\left[ \frac{v^3}{3} \right]_0^{x_s^-} -\left[ \frac{v^3}{3} \right]_{x_s^+}^1 + \frac{v_l^2-v_r^2}{2} x_s'(t) \\
&= \frac{v^3(t,0)}{3} - \frac{v_l^3}{3} + \frac{v_r^3}{3} - \frac{v^3(t,1)}{3} + \frac{v_l^2-v_r^2}{2} x_s'(t)\;.
\end{align*}}
By Rankine–Hugoniot condition~\cite[p.~31]{LeVeque_1992}, the shock speed satisfies
$
x_s'(t) = \frac{\nicefrac{v_r^2}{2} - \nicefrac{v_l^2}{2}}{v_r - v_l} = \frac{v_r + v_l}{2}\;.
$
Substituting yields the result.
\end{proof}

To ensure a physically consistent behavior, a numerical scheme should be able to:
\begin{itemize}
\item capture a shock when it appears;
\item lead to a coherent discrete power balance.
\end{itemize}
The pH formalism, together with an appropriate application of the finite element method, answers the latter. The former is the purpose of the present work.

\vspace{-0.5em}
\subsection{A change of Hamiltonian}
\vspace{-0.5em}

In the pH framework, a Hamiltonian $\mathcal{H}$ has to be chosen, and different Hamiltonians lead to different representations. Often, a physical quantity, such as energy or entropy, is a natural candidate, and the power balance is encoded in a so-called Dirac structure. We refer the reader to~\cite{van_der_Schaft_2014} and the references therein for more details on this subject. %\red{For the purpose of this work, a Dirac structure is the inverse graph of a skew-symmetric matrix.}

One difficulty of the framework is the non-uniqueness of the representation, and in particular, the choice of an appropriate Hamiltonian to avoid unnecessary computation, especially in this non-linear case. A first logical choice is to take the kinetic energy $E$ as Hamiltonian. However, it would lead to a modulated Dirac structure, \textit{i.e.}, a Dirac structure depending on the state variable (also known as energy variable, the variable of the Hamiltonian), or in other words, the non-linearity is taken \emph{inside} the Dirac structure. The proposed approach here is to consider a constant Dirac structure and to export the non-linearity inside the constitutive relation, which relates the state variable with the co-state variable (or co-energy variable, the derivative of $\mathcal{H}$ in the direction of the state variable).

More precisely, let us write
$
\partial_t v(t,x) = - \partial_x e(t,x)\;,
$
where $e := \delta_v \mathcal{H}$ is the co-state variable of a Hamiltonian $\mathcal{H}(v)$, and $\delta_v$ is the variational (or Fréchet) derivative of $\mathcal{H}$ in the direction $v$. Since we consider the Burgers' equation, it leads to $\delta_v \mathcal{H} = \frac{v^2}{2}$, \textit{i.e.}, to (up to an additive constant)
{\small
\begin{equation}
\label{eq:Hamiltonian}
\mathcal{H}(v(t,x)) := \int_0^1 \frac{v^3(t,x)}{6} \, \mathrm{d}x\;.
\end{equation}}

\begin{remark}
The Hamiltonian functional $\mathcal{H}$ is not a physical energy and is not sign-definite. It is introduced as a generating functional that allows the non-linear convective term of the BE to be entirely embedded in the constitutive relation, leading to a constant Dirac structure. As a consequence, passivity or dissipation w.r.t. $\mathcal{H}$ should not be interpreted in an energetic or thermodynamic sense.
\end{remark}

\begin{lemma}
\label{lem:power-balance-cube}
For smooth solutions, the Hamiltonian $\mathcal{H}$ satisfies the power balance
{\small
\begin{equation}
\label{eq:power-balance-smooth}
\frac{\mathrm{d}\mathcal{H}}{\mathrm{d}t} = \frac{v^4(t,0) - v^4(t,1)}{8}\;.
\end{equation}}
When a shock occurs at $x_s(t)$, it satisfies
{\small
\begin{equation}
\label{eq:power-balance-cube}
\frac{\mathrm{d}\mathcal{H}}{\mathrm{d}t} = \frac{v^4(t,0) - v^4(t,1)}{8} + \frac{(v_r - v_l)^3(v_r + v_l)}{24}\;.
\end{equation}}
\end{lemma}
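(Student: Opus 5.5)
The argument follows the proof of Lemma~\ref{lem:kinetic-energy}, with the density $\nicefrac{v^3}{6}$ in place of $\nicefrac{v^2}{2}$. The key pointwise identity is that wherever $v$ is smooth and solves the IBE strongly,
$$
\partial_t \frac{v^3}{6} = \frac{v^2}{2}\,\partial_t v = -\frac{v^2}{2}\,\partial_x \frac{v^2}{2} = -\frac{v^3}{2}\,\partial_x v = -\partial_x \frac{v^4}{8}.
$$
For smooth solutions we integrate this over $(0,1)$ and apply the fundamental theorem of calculus. This gives $\frac{\mathrm{d}\mathcal{H}}{\mathrm{d}t} = \frac{v^4(t,0)-v^4(t,1)}{8}$, which is \eqref{eq:power-balance-smooth}.

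When a shock is present at $x_s(t)$, we split $\mathcal{H}$ as $\int_0^{x_s(t)} + \int_{x_s(t)}^1$ and differentiate each piece with Leibniz's rule. The boundary contributions at the moving endpoint give the extra term $\frac{v_l^3 - v_r^3}{6}\,x_s'(t)$. On each side we substitute the pointwise identity and integrate, which yields
$$
\frac{v^4(t,0)-v^4(t,1)}{8} - \frac{v_l^4 - v_r^4}{8} + \frac{v_l^3-v_r^3}{6}\,x_s'(t).
$$
We then insert the Rankine--Hugoniot speed $x_s'(t) = \frac{v_l+v_r}{2}$, exactly as in Lemma~\ref{lem:kinetic-energy}.

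The only real work is the algebra showing that the shock contribution
$$
S := \frac{(v_l^3 - v_r^3)(v_l+v_r)}{12} - \frac{v_l^4 - v_r^4}{8}
$$
equals $\frac{(v_r-v_l)^3(v_r+v_l)}{24}$. We first factor out $(v_l - v_r)(v_l+v_r)$, using $v_l^3-v_r^3 = (v_l-v_r)(v_l^2+v_lv_r+v_r^2)$ and $v_l^4-v_r^4 = (v_l-v_r)(v_l+v_r)(v_l^2+v_r^2)$. This gives
$$
S = (v_l-v_r)(v_l+v_r)\left[\frac{v_l^2+v_lv_r+v_r^2}{12} - \frac{v_l^2+v_r^2}{8}\right].
$$
The bracket simplifies to $\frac{-v_l^2+2v_lv_r-v_r^2}{24} = -\frac{(v_l-v_r)^2}{24}$. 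Hence $S = -\frac{(v_l-v_r)^3(v_l+v_r)}{24} = \frac{(v_r-v_l)^3(v_r+v_l)}{24}$, which is \eqref{eq:power-balance-cube}.

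We expect no genuine obstacle beyond this computation. The one point to watch is the sign bookkeeping of the one-sided limits $x_s^\pm$ when the integrals are evaluated, which is handled exactly as in the proof of Lemma~\ref{lem:kinetic-energy}.
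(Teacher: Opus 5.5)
Your proposal is correct and takes the same approach as the paper. The paper's proof only says the argument of Lemma~\ref{lem:kinetic-energy} carries over once one notes $\partial_t (v^3/6) = -\partial_x (v^4/8)$. You have filled in the omitted Leibniz-rule and Rankine--Hugoniot bookkeeping, and the factorization showing that the shock contribution equals $\frac{(v_r-v_l)^3(v_r+v_l)}{24}$; all of it checks out.
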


\begin{proof}
The proof is similar to that of Lemma~\ref{lem:kinetic-energy}, once we observe that $\partial_t (\nicefrac{v^3}{6}) = -\partial_x (\nicefrac{v^4}{8})$.
\end{proof}

\vspace{-0.5em}
\subsection{Port-Hamiltonian representation}
\vspace{-0.5em}

Let us choose the velocity $v$ as state variable. Then, the co-state variable is $e := \delta_{v} \mathcal{H} = \frac{v^2}{2}$, as desired, since $\mathcal{H}$ has been designed for in that sense. Furthermore, we assume that the boundary control and observation are
{\small
$$
\left\lbrace
\begin{array}{rcl}
u_\ell(t) &=& \nicefrac{e(t,0)}{\sqrt{2}}\;, \\
u_r(t) &=& \nicefrac{e(t,1)}{\sqrt{2}}\;,
\end{array}
\right.
\quad
\left\lbrace
\begin{array}{rcl}
y_\ell(t) &=& \nicefrac{e(t,0)}{\sqrt{2}}\;, \\
y_r(t) &=& \nicefrac{-e(t,1)}{\sqrt{2}}\;.
\end{array}
\right.
$$}
Then, the power balance~\eqref{eq:power-balance-cube} reads
{\small
$$
\frac{\mathrm{d}\mathcal{H}}{\mathrm{d}t} = u_\ell(t) y_\ell(t) + u_r(t) y_r(t) + \frac{(v_r - v_l)^3(v_r + v_l)}{24}\;,
$$}
and the pH system reads
{\small
\begin{equation}
\label{eq:pH-system}
\left\lbrace
\begin{array}{rcl}
\partial_t v = -\partial_x e\;, &\quad& e = \nicefrac{v^2}{2}\;, \\
u_\ell = \nicefrac{\gamma_0 e}{\sqrt{2}}\;, &\quad&
y_\ell = \nicefrac{\gamma_0 e}{\sqrt{2}}\;, \\
u_r = \nicefrac{\gamma_1 e}{\sqrt{2}}\;, &\quad&
y_r = \nicefrac{-\gamma_1 e}{\sqrt{2}}\;,
\end{array}
\right.
\end{equation}}
where $\gamma_0$, $\gamma_1$, are the Dirichlet traces at $x=0$, $x=1$.

\begin{theorem}
\label{th:Dirac}
Let us denote the:
\begin{itemize}
\item structure operator: {\small$J := -\partial_x \in \mathcal{L}(H^1(0,1);L^2(0,1))$};
\item control operator: {\small$G := \frac{1}{\sqrt{2}} \begin{bmatrix} \gamma_0 \\ \gamma_1 \end{bmatrix} \in \mathcal{L}(H^1(0,1);\mathbb{R}^2)$};
\item observation operator: {\small$K := \frac{1}{\sqrt{2}} \begin{bmatrix} \gamma_0 \\ -\gamma_1 \end{bmatrix} \in \mathcal{L}(H^1(0,1);\mathbb{R}^2)$};
\end{itemize}
\begin{multline}
\label{eq:Dirac-structure}
\text{then,} \; \mathcal{D} := \Bigg\{ \left( \begin{pmatrix} f \\ f_\partial \end{pmatrix}, \begin{pmatrix} e \\ e_\partial \end{pmatrix} \right) \; \mid \; e \in H^1(0,1), \, f = J e, \\ f_\partial = - K e, \, e_\partial = G e \Bigg\} \;,
\end{multline}
is a Dirac structure on $\mathcal{B} := \left( L^2(0,1) \times \mathbb{R}^2 \right)^2$ endowed with the symmetrized bilinear pairing
{\small
\begin{multline*}
\left[ \begin{pmatrix} \bm{f} \\ \bm{e} \end{pmatrix}, \begin{pmatrix} \widetilde{\bm{f}} \\ \widetilde{\bm{e}} \end{pmatrix} \right]_{\mathcal{B}}
:= \int_0^1 f(x) \; \widetilde e(x) \, \mathrm{d}x 
+ \int_0^1 \widetilde f(x) \; e(x) \, \mathrm{d}x \\
+ f_\partial \cdot \widetilde e_\partial
+ \widetilde f_\partial \cdot e_\partial\;,
\end{multline*}}
for all
{\small
$
\begin{pmatrix} \bm{f} \\ \bm{e} \end{pmatrix} :=
\begin{pmatrix}
f &
f_\partial &
e &
e_\partial
\end{pmatrix}^\top,
\;
\begin{pmatrix} \widetilde{\bm{f}} \\ \widetilde{\bm{e}} \end{pmatrix} :=
\begin{pmatrix}
\widetilde f &
\widetilde f_\partial &
\widetilde e &
\widetilde e_\partial
\end{pmatrix}^\top\;
\in \mathcal{B}.
$}
\end{theorem}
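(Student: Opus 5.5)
The plan is the standard two-inclusion argument $\mathcal{D}=\mathcal{D}^{\perp}$, where $\mathcal{D}^\perp$ is the orthogonal complement in $\mathcal{B}$ for $[\cdot,\cdot]_{\mathcal{B}}$. The isotropy inclusion goes through, but the coisotropy step looks like it fails as stated; a counterexample is given below.

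\emph{Isotropy ($\mathcal{D}\subset\mathcal{D}^\perp$).} Take two elements of $\mathcal{D}$ generated by $e,\widetilde e\in H^1(0,1)$. By the definition of $\mathcal{D}$ the pairing is
\[
-\int_0^1 (e'\widetilde e+\widetilde e' e)\,\mathrm{d}x-Ke\cdot G\widetilde e-K\widetilde e\cdot Ge .
\]
The integral is $-\int_0^1 (e\widetilde e)'\,\mathrm{d}x=e(0)\widetilde e(0)-e(1)\widetilde e(1)$, which makes sense since $e\widetilde e\in W^{1,1}(0,1)$. The boundary terms are
\[
-\tfrac12\big(e(0)\widetilde e(0)-e(1)\widetilde e(1)\big)-\tfrac12\big(\widetilde e(0)e(0)-\widetilde e(1)e(1)\big).
\]
Everything cancels, so the pairing vanishes. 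This is the expected power-conservation property, consistent with Lemma~\ref{lem:power-balance-cube} in the smooth case.

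\emph{Coisotropy ($\mathcal{D}^\perp\subset\mathcal{D}$).} Let $(\widetilde f,\widetilde f_\partial,\widetilde e,\widetilde e_\partial)\in\mathcal{D}^\perp$.

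First, test against $e\in C_c^\infty(0,1)$, for which all traces vanish. This gives $\int_0^1 \widetilde e\, e'\,\mathrm{d}x=\int_0^1\widetilde f e\,\mathrm{d}x$, so the distributional derivative satisfies $\widetilde e'=-\widetilde f\in L^2$. Hence $\widetilde e\in H^1(0,1)$ and $\widetilde f=J\widetilde e$.

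Next, test against a general $e\in H^1$ and integrate by parts. Since the traces $e(0),e(1)$ can be prescribed arbitrarily, the remaining condition splits into two scalar equations:
\[
\widetilde e(0)+\tfrac{1}{\sqrt2}\big(\widetilde f_{\partial,1}-\widetilde e_{\partial,1}\big)=0,
\qquad
-\widetilde e(1)+\tfrac{1}{\sqrt2}\big(\widetilde e_{\partial,2}+\widetilde f_{\partial,2}\big)=0.
\]
To conclude $\widetilde f_\partial=-K\widetilde e$ and $\widetilde e_\partial=G\widetilde e$, one would need four scalar equations. Only two are available, since every element of $\mathcal{D}$ carries just two free boundary values, namely the traces of a single function.

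This is the main obstacle, and I expect it to be fatal for the statement as written. A direct check gives an explicit counterexample. The element $(\widetilde f,\widetilde f_\partial,\widetilde e,\widetilde e_\partial)=(0,(1,-1)^\top,0,(1,1)^\top)$ has pairing with any element of $\mathcal{D}$ equal to
\[
-Ke\cdot(1,1)^\top+(1,-1)^\top\cdot Ge=-\tfrac{e(0)-e(1)}{\sqrt2}+\tfrac{e(0)-e(1)}{\sqrt2}=0 .
\]
So it lies in $\mathcal{D}^\perp$, yet it is not in $\mathcal{D}$, because $\widetilde e=0$ forces $\widetilde e_\partial=0$.

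The argument therefore proves only that $\mathcal{D}$ is isotropic, i.e.\ a power-conserving (Tellegen) structure. To obtain a genuine Dirac structure I would reduce the boundary port space to $\mathbb{R}^1\times\mathbb{R}^1$ by a single independent pairing of the traces. One choice is $f_\partial=\tfrac{1}{\sqrt2}(e(1)-e(0))$ and $e_\partial=\tfrac{1}{\sqrt2}(e(1)+e(0))$, so that $f_\partial e_\partial=\tfrac12(e(1)^2-e(0)^2)$. With this choice the two equations above do pin down $(\widetilde f_\partial,\widetilde e_\partial)$, and the same two-step argument closes.
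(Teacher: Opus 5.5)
Your analysis is correct: the statement as written is false, and the paper's proof breaks at exactly the coisotropy step you single out.

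Your isotropy computation is right. Your vector $w=(0,(1,-1)^\top,0,(1,1)^\top)$ satisfies $[w,d]_{\mathcal B}=0$ for every $d\in\mathcal D$, yet $w\notin\mathcal D$. Hence $\mathcal D^{\perp_{\mathcal B}}\neq\mathcal D$, so the paper's own definition of a Dirac structure fails. It also fails under the ``maximal isotropic'' reading: $[w,w]_{\mathcal B}=2\,(1,-1)\cdot(1,1)=0$, so $\mathcal D+\mathrm{span}\{w\}$ is a strictly larger isotropic subspace. In fact $\mathcal D^{\perp_{\mathcal B}}/\mathcal D$ is two-dimensional. It is represented by the pure boundary elements $(0,\widetilde f_\partial,0,\widetilde e_\partial)$ with $(\widetilde f_\partial,-\widetilde e_\partial)$ orthogonal in $\mathbb R^4$ to the range $\{(Ge,Ke)\}=\{\tfrac1{\sqrt2}(a,b,a,-b)\;:\;a,b\in\mathbb R\}$. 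Equivalently, $\widetilde f_{\partial,1}=\widetilde e_{\partial,1}$ and $\widetilde f_{\partial,2}=-\widetilde e_{\partial,2}$.

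The paper uses the same two-inclusion scheme as you. It tests against the elements of $\mathcal D$ generated by $\sin(\kappa\pi x)$ and $\cos(\kappa\pi x)$. It then asserts the boundary identities $f_\partial=-Ke$, $e_\partial=Ge$ for the element of $\mathcal D^{\perp_{\mathcal B}}$ (untilded in its notation). But these test functions have traces $(0,0)$ and $(1,(-1)^\kappa)$. Like any family of test functions, they yield only your two scalar relations on four unknown boundary components.

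The remark after the theorem says the range of $[G\;K]^\top$ is not dense in the four-dimensional boundary space. This is not a side issue but the obstruction itself: the orthogonal complement of that range is exactly what produces $\mathcal D^{\perp_{\mathcal B}}\setminus\mathcal D$. The same count (eight boundary scalars against four traces) shows that Theorem~\ref{th:Dirac-viscous}, whose proof defers to this one, has the same defect.

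Your repair works. Take $f_\partial=\tfrac1{\sqrt2}(e(1)-e(0))$ and $e_\partial=\tfrac1{\sqrt2}(e(1)+e(0))$ in $\mathbb R$. The trace conditions become $\widetilde e(0)+\tfrac1{\sqrt2}(\widetilde f_\partial-\widetilde e_\partial)=0$ and $-\widetilde e(1)+\tfrac1{\sqrt2}(\widetilde f_\partial+\widetilde e_\partial)=0$. Their sum and difference give $\widetilde f_\partial=\tfrac1{\sqrt2}(\widetilde e(1)-\widetilde e(0))$ and $\widetilde e_\partial=\tfrac1{\sqrt2}(\widetilde e(1)+\widetilde e(0))$.

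This is the standard boundary flow/effort parametrization for $J=-\partial_x$, with signs adapted to the $+$ boundary term in $[\cdot,\cdot]_{\mathcal B}$. It carries the same boundary power, since $f_\partial e_\partial=\tfrac12(e(1)^2-e(0)^2)=-(u_\ell y_\ell+u_r y_r)$. The paper's four-component ports were redundant anyway: with its definitions, $y_\ell=u_\ell$ and $y_r=-u_r$ identically.
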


\begin{proof}
A Dirac structure on $\mathcal{B}$ is a maximal isotropic subspace for this pairing, \textit{i.e.}, a subspace $\mathcal{D}$ is a Dirac structure on $\mathcal{B}$ if and only if
$
\mathcal{D}^{\perp_\mathcal{B}} = \mathcal{D}\;,
$
where $\mathcal{D}^{\perp_\mathcal{B}}$ is the orthogonal companion of $\mathcal{D}$ w.r.t. the pairing $\left[ \cdot, \cdot \right]_{\mathcal{B}}$\;.

Let $\begin{pmatrix} \bm{f} \\ \bm{e} \end{pmatrix} := \begin{pmatrix}
f &
f_\partial &
e &
e_\partial
\end{pmatrix}^\top$ be in $\mathcal{D}$ given by~\eqref{eq:Dirac-structure}. Then, for all $\begin{pmatrix} \widetilde{\bm{f}} \\ \widetilde{\bm{e}} \end{pmatrix} := \begin{pmatrix}
\widetilde f &
\widetilde f_\partial &
\widetilde e &
\widetilde e_\partial
\end{pmatrix}^\top \in \mathcal{D}$, one computes
$$
\begin{array}{rcl}
\left[ \begin{pmatrix} \bm{f} \\ \bm{e} \end{pmatrix}, \begin{pmatrix} \widetilde{\bm{f}} \\ \widetilde{\bm{e}} \end{pmatrix} \right]_{\mathcal{B}}
&=& - \int_0^1 \partial_x e(x) \; \widetilde e(x) \, \mathrm{d}x  \\
&\quad&
- \int_0^1 \partial_x \widetilde e(x) \; e(x) \, \mathrm{d}x
+ 2 \left[ \frac{e \, \widetilde e}{2} \right]_0^1\;, \\
&=& - \left[ e \, \widetilde e \right]_0^1 + \left[ e \, \widetilde e \right]_0^1 = 0\;.
\end{array}
$$
Hence, $\mathcal{D} \subset \mathcal{D}^{\perp_{\mathcal{B}}}$\;.

Reciprocally, let $\begin{pmatrix}
f &
f_\partial &
e &
e_\partial
\end{pmatrix}^\top \in \mathcal{D}^{\perp_{\mathcal{B}}}$, then, for all $\begin{pmatrix}
\widetilde f &
\widetilde f_\partial &
\widetilde e &
\widetilde e_\partial
\end{pmatrix}^\top \in \mathcal{D}$, one has
\begin{center}
$
0 = \int_0^1 f(x) \; \widetilde e(x) \, \mathrm{d}x 
+ \int_0^1 \widetilde f(x) \; e(x) \, \mathrm{d}x 
+ f_\partial \cdot G \widetilde e
- K \widetilde e \cdot e_\partial\;,
$
\end{center}
for all $\widetilde e \in H^1(0,1)$. Taking the $L^2$-basis $\widetilde e(x) = \sin(\kappa \pi x)$, $\kappa \in \mathbb{N}$ and $\widetilde e(x) = \cos(\kappa \pi x)$, $\kappa \in \mathbb{N}\cup\{0\}$, implies that $f = -\partial_x e = J e$ and the boundary terms
$
f_\partial = - K e, \, e_\partial = G e,
$
and then, $\mathcal{D}^{\perp_{\mathcal{B}}} \subset \mathcal{D}$\;, hence $\mathcal{D} = \mathcal{D}^{\perp_{\mathcal{B}}}$\;.
\end{proof}

\begin{remark}
The Dirac structure of Theorem~\ref{th:Dirac} does encode the power balance~\eqref{eq:power-balance-smooth} of the Hamiltonian $\mathcal{H}$ for smooth solutions. However, the dissipation related to a shock, as in~\eqref{eq:power-balance-cube}, is not contained in $\mathcal{D}$ as is.
\end{remark}

\begin{remark}
Note that~\cite[Theorem~4.3]{Kurula_2010} is not applicable since the range of the operator $\begin{bmatrix} G & K \end{bmatrix}^\top$ is not dense in $\mathbb{C}^2 \oplus \mathbb{C}^2$ (Hilbert spaces must be complex-valued in that framework).
\end{remark}

\vspace{-0.5em}
\subsection{Structure-preserving discretization}
\label{sec:discrete-inviscid}
\vspace{-0.5em}

Let us consider the pH system~\eqref{eq:pH-system} in its weak formulation, as follows: we are seeking for $v$ and $e$ such that for all test functions $\varphi \in H^1(0,1)$:
{\small
\begin{equation}
\label{eq:pH-system-weak}
\left\lbrace
\begin{array}{rcl}
\int_0^1 \partial_t v \, \varphi \, \mathrm{d}x &=& \int_0^1 e \, \partial_x \varphi \, \mathrm{d}x 
+ u_\ell \; \sqrt{2} \gamma_0 \varphi - u_r \; \sqrt{2} \gamma_1 \varphi \;, \\
\int_0^1 e \, \varphi \, \mathrm{d}x &=& \int_0^1 \frac{v^2}{2} \, \varphi \, \mathrm{d}x \;, \\
2 y_\ell &=& \sqrt{2} \gamma_0 e\;, \\
2 y_r &=& - \sqrt{2} \gamma_1 e\;,
\end{array}
\right.
\end{equation}}
where an integration by parts has been applied.

Let $\Phi := \{ \varphi^i \}_{i=0}^{N+1}$ that satisfy
$$
\varphi^0 = d_{0}\;,
\quad
\varphi^{N+1} = d_{1}\;,
$$
where $d$ is the Dirac mass, and $\{ \varphi^i \}_{i=1}^{N}$ be a finite element basis of $H^1_0(0,1)$, and define the vectors of coefficients
$$
\underline{v}(t) :=
\begin{pmatrix}
v_1(t) \\ \vdots \\ v_N(t)
\end{pmatrix} \in \mathbb{R}^N\;, \quad
\underline{e}(t) :=
\begin{pmatrix}
e_1(t) \\ \vdots \\ e_N(t)
\end{pmatrix} \in \mathbb{R}^N\;, 
$$
of the approximations $v_d := \sum_{i=1}^N v_i \; \varphi^i$ (resp. $e_d := \sum_{i=1}^N e_i \; \varphi^i$) of $v$ (resp. $e$) in the basis $\{ \varphi^i \}_{i=1}^{N}$.

Then, system~\eqref{eq:pH-system-weak} becomes
{\small
\begin{equation}
\label{eq:matrix}
\begin{bmatrix}
M & 0 & 0 \\
0 & 2 & 0 \\
0 & 0 & 2
\end{bmatrix}
\begin{pmatrix}
\nicefrac{\mathrm{d} \underline{v}(t)}{\mathrm{d}t} \\
- y_\ell(t) \\
- y_r(t)
\end{pmatrix}
=
\begin{bmatrix}
D & B_\ell & B_r \\
-B_\ell^\top & 0 & 0 \\
-B_r^\top & 0 & 0
\end{bmatrix}
\begin{pmatrix}
\underline{e}(t) \\
u_\ell(t) \\
u_r(t)
\end{pmatrix}\;,
\end{equation}}
together with the constitutive relation defined in weak form, \textit{i.e.}, for all $i = 1, \dots, N$
{\small
$$
\int_0^1 e_d(t,x) \, \varphi^i(x) \, \mathrm{d}x = \int_0^1 \frac{v_d^2(t,x)}{2} \, \varphi^i(x) \, \mathrm{d}x \;,
$$}
which rewrites using the coefficients, for all $i = 1, \dots, N$
\begin{center}
$
(M \underline{e}(t))_i = \left( \int_0^1 \varphi^i(x) \frac{\left( \sum_{j=1}^N \varphi^j(x) v_j(t) \right)^2}{2} \, \mathrm{d}x \right)_i \;,
$
\end{center}
where the matrices are given by
{\small
$$
\begin{array}{@{}rcl@{}}
\mathcal{M}_N(\mathbb{R}) \ni (M)_{ij} &:=& \int_0^1 \varphi^j(x) \, \varphi^i(x) \, \mathrm{d}x \;, \\
\mathcal{M}_N(\mathbb{R}) \ni (D)_{ij} &:=& \int_0^1 \varphi^j(x) \, \partial_x \varphi^i(x) \, \mathrm{d}x \;, \\
\mathbb{R}^N \ni (B_\ell)_i &:=& \sqrt{2} \delta_{0i} \;, \quad
\mathbb{R}^N \ni (B_r)_i := -\sqrt{2} \delta_{(N+1)i} \;,
\end{array}
$$}
where $\delta$ is the Kronecker symbol.

\begin{remark}
The constitutive relation is enforced in a Galerkin sense; in particular, $e_d$ is the $L^2$-projection of $\nicefrac{v_d^2}{2}$ onto the finite element space.
\end{remark}

\begin{proposition}
\label{prop:D-skew-sym}
$D$ is skew-symmetric, \textit{i.e.}, $D^\top = -D$\;.
\end{proposition}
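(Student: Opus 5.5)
We prove the identity entrywise, using integration by parts together with the vanishing boundary traces of the interior basis functions. By definition, $(D)_{ij} = \int_0^1 \varphi^j \, \partial_x \varphi^i \, \mathrm{d}x$ for $1 \le i,j \le N$. Hence
$$
(D)_{ij} + (D)_{ji} = \int_0^1 \left( \varphi^j \, \partial_x \varphi^i + \varphi^i \, \partial_x \varphi^j \right) \mathrm{d}x = \int_0^1 \partial_x \left( \varphi^i \varphi^j \right) \mathrm{d}x .
$$
Skew-symmetry of $D$ is therefore equivalent to this integral vanishing for every pair $(i,j)$.

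The first step is to justify the product rule and the fundamental theorem of calculus in this setting. Each $\varphi^i$ with $1 \le i \le N$ belongs to $H^1(0,1)$, which in one dimension embeds into absolutely continuous functions. The product $\varphi^i \varphi^j$ is thus in $W^{1,1}(0,1)$, with $\partial_x(\varphi^i\varphi^j) = \varphi^j \partial_x\varphi^i + \varphi^i \partial_x \varphi^j$. Consequently
$$
\int_0^1 \partial_x(\varphi^i\varphi^j)\,\mathrm{d}x = \gamma_1\varphi^i \, \gamma_1\varphi^j - \gamma_0\varphi^i \, \gamma_0\varphi^j .
$$
For standard piecewise polynomial finite elements, this step can also be done element by element. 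In that case the interior node contributions telescope, because the basis functions are continuous across element interfaces.

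The second step is to use the boundary conditions. The family $\{\varphi^i\}_{i=1}^N$ is a basis of a finite element subspace of $H^1_0(0,1)$, so $\gamma_0 \varphi^i = \gamma_1 \varphi^i = 0$ for all $1\le i\le N$. Both boundary terms therefore vanish, which gives $(D)_{ij} = -(D)_{ji}$, i.e., $D^\top = -D$. This also shows that the diagonal entries satisfy $(D)_{ii} = \frac12 [ (\varphi^i)^2 ]_0^1 = 0$.

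The only point needing care is to make clear that $D$ is built on the interior basis $\{\varphi^i\}_{i=1}^N$ alone. The Dirac masses $\varphi^0$ and $\varphi^{N+1}$ are not part of $D$; they enter only through $B_\ell$ and $B_r$. The boundary contributions $[e\,\varphi]_0^1$ from the weak formulation~\eqref{eq:pH-system-weak} have already been moved into those control columns. Once this is fixed, the argument is a direct transcription of the continuous computation in the proof of Theorem~\ref{th:Dirac}, namely $J=-\partial_x$ being formally skew-adjoint up to boundary terms.
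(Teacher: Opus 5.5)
Your proof is correct and follows the same route as the paper: integration by parts on $\int_0^1 \varphi^j\,\partial_x\varphi^i\,\mathrm{d}x$, with the boundary terms vanishing because the interior basis functions are zero at $x=0$ and $x=1$. The only difference is cosmetic: you get the diagonal and off-diagonal cases from the single identity $(D)_{ij}+(D)_{ji}=[\varphi^i\varphi^j]_0^1$, while the paper first shows $(D)_{ii}=0$ and then handles $i\neq j$ separately.
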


\begin{proof}
Let us start by noticing that the diagonal entries are null. For all $\varphi^i$, either $i\in\{0,N+1\}$, and $(D)_{ii} = 0$ trivially, or $0<i<N+1$ and the support of $\varphi^i$ is inside $(0,1)$ and hence
\begin{center}
$
(D)_{ii} = \int_0^1 \varphi^i(x) \, \partial_x \varphi^i(x) \, \mathrm{d}x
= - \int_0^1 \partial_x \varphi^i(x) \, \varphi^i(x) \, \mathrm{d}x \;,
$
\end{center}
implying $(D)_{ii} = 0$.
Now, for $i\neq j$, it is clear that $\int_0^1 \varphi^i(x) \, \partial_x \varphi^j(x) \, \mathrm{d}x = - \int_0^1 \varphi^j(x) \, \partial_x \varphi^i(x) \, \mathrm{d}x$.
\end{proof}

\begin{remark}
When dealing with non-homogeneous Dirichlet boundary conditions, one discretization process consists in a finite element family for $H^1_0(0,1)$ and boundary value enforcement. This is precisely what is done here when choosing $\Phi$ as Kronecker symbols at the boundaries, and as a finite element family \emph{inside} the domain.
\end{remark}

\begin{proposition}
The discrete Hamiltonian
$
\mathcal{H}^d(\underline{v}(t)) := \mathcal{H}(v_d(t,x))\;,
$
satisfies the discrete power balance
{\small
$$
\frac{\mathrm{d}}{\mathrm{d}t} \mathcal{H}^d(\underline{v}(t)) = u_\ell(t) \, 2 \, y_\ell(t) + u_r(t) \, 2 \, y_r(t) \;,
$$}
along the smooth solutions, \textit{i.e.}, in $\mathcal{C}^1([0,\infty); H^1(0,1))$.
\end{proposition}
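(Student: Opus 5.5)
The argument is the usual discrete pH energy computation: chain rule, then the Galerkin constitutive relation, then skew-symmetry of $D$ from Proposition~\ref{prop:D-skew-sym}.

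\textbf{Step 1 (chain rule).} Since $\mathcal{H}^d(\underline{v}) = \int_0^1 \frac{1}{6}\big(\sum_j v_j \varphi^j\big)^3 \,\mathrm{d}x$, differentiating in time along a $\mathcal{C}^1$ trajectory gives
$$
\frac{\mathrm{d}}{\mathrm{d}t}\mathcal{H}^d(\underline{v}(t)) = \sum_{i=1}^N \frac{\mathrm{d}v_i}{\mathrm{d}t}\int_0^1 \frac{v_d^2}{2}\,\varphi^i \,\mathrm{d}x .
$$
This step only uses $v_d=\sum_i v_i\varphi^i$ and differentiation under the integral sign, which is legitimate because the basis is finite and the trajectory is smooth.

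\textbf{Step 2 (constitutive relation).} The Galerkin relation $(M\underline{e})_i=\int_0^1 \varphi^i \frac{v_d^2}{2}\,\mathrm{d}x$ replaces the integral, so the derivative becomes
$$
\frac{\mathrm{d}}{\mathrm{d}t}\mathcal{H}^d = \Big(\frac{\mathrm{d}\underline{v}}{\mathrm{d}t}\Big)^{\!\top} M\underline{e} = \underline{e}^\top M \frac{\mathrm{d}\underline{v}}{\mathrm{d}t},
$$
using that $M$ is symmetric. This is where $e_d$ being the $L^2$-projection of $v_d^2/2$ is essential, as noted in the remark above.

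\textbf{Step 3 (structure matrix).} The first block row of \eqref{eq:matrix} gives $M\,\mathrm{d}\underline{v}/\mathrm{d}t = D\underline{e}+B_\ell u_\ell+B_r u_r$. Hence
$$
\frac{\mathrm{d}}{\mathrm{d}t}\mathcal{H}^d = \underline{e}^\top D\underline{e} + \underline{e}^\top B_\ell\, u_\ell + \underline{e}^\top B_r\, u_r .
$$
By Proposition~\ref{prop:D-skew-sym}, $D$ is skew-symmetric, so $\underline{e}^\top D\underline{e}=0$. The last two block rows of \eqref{eq:matrix} read $-2y_\ell=-B_\ell^\top\underline{e}$ and $-2y_r=-B_r^\top\underline{e}$. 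Substituting $\underline{e}^\top B_\ell = 2y_\ell$ and $\underline{e}^\top B_r = 2y_r$ yields the claimed balance
$$
\frac{\mathrm{d}}{\mathrm{d}t}\mathcal{H}^d = u_\ell\,2y_\ell + u_r\,2y_r .
$$

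\textbf{Main obstacle.} The difficulty is notational rather than analytical. The boundary vectors $B_\ell, B_r$ are indexed by the Dirac basis functions $\varphi^0$ and $\varphi^{N+1}$, which lie outside the range $1,\dots,N$ of $\underline{v}$ and $\underline{e}$. Before the pairing $\underline{e}^\top B$ makes sense, I would fix one consistent convention. The natural choice is to interpret $\underline{e}^\top B_\ell=\sqrt{2}\,\gamma_0 e_d$ and $\underline{e}^\top B_r=-\sqrt{2}\,\gamma_1 e_d$, as dictated by the weak form \eqref{eq:pH-system-weak}. With this interpretation the output equations in \eqref{eq:matrix} are exactly the weak-form relations $2y_\ell=\sqrt{2}\gamma_0 e$ and $2y_r=-\sqrt{2}\gamma_1 e$, and the cancellation in Step 3 goes through. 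Everything else is the algebraic computation above.
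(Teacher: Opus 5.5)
Your proposal is correct and follows the paper's argument, written in coefficient form: the paper tests the weak form \eqref{eq:pH-system-weak} with $\varphi = e_d$ and kills $\int_0^1 e_d\,\partial_x e_d\,\mathrm{d}x$ using $e_d \in H^1_0(0,1)$, which is exactly your $\underline{e}^\top D\underline{e} = 0$, and it reads the boundary terms off the output equations as you do. Your Step 2 is actually more careful than the paper, which writes $e_d = v_d^2/2$, whereas only the Galerkin projection identity $\int_0^1 \frac{v_d^2}{2}\,\partial_t v_d\,\mathrm{d}x = \int_0^1 e_d\,\partial_t v_d\,\mathrm{d}x$ holds (because $\partial_t v_d$ lies in the discrete space), and that identity is all the argument needs.
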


\begin{remark}
The factor $2$ appearing in the power balance must be understood as the metric change at the boundary induced by the structure-preserving discretization process. Indeed, it is mandatory to ensure that~\eqref{eq:matrix} defines a Dirac structure at the discrete level: the block diagonal matrix $\mathrm{Diag}(M, 2, 2)$ defines the scalar product to consider on $\mathbb{R}^{N+2}$ for the discretization. This is well-known and understood in higher dimension, where the boundary mass matrix appears naturally in the discrete power balance.
\end{remark}

\begin{proof}
Recall that $v_d = \sum_{i=1}^N \varphi^i(x) v_i(t)$ and that $e_d = \nicefrac{v_d^2}{2}$. Then
$$
\begin{array}{rcl}
&& \frac{\mathrm{d}}{\mathrm{d}t} \mathcal{H}^d(\underline{v}(t)) 
= \frac{\mathrm{d}}{\mathrm{d}t} \mathcal{H}(v_d(t,x)) \;, \\
&=& \int_0^1 \frac{v_d^2(t,x)}{2} \, \partial_t v_d(t,x) \, \mathrm{d}x 
= \int_0^1 e_d(t,x) \, \partial_t v_d(t,x) \, \mathrm{d}x \;, \\
&=& \int_0^1 e_d(t,x) \, \partial_x e_d(t,x) \, \mathrm{d}x 
+ u_\ell(t) \, 2 \, y_\ell(t) + u_r(t) \, 2 \, y_r(t) \;,
\end{array}
$$
where the last equality comes from~\eqref{eq:pH-system-weak} with $\varphi = e_d$.

Finally
$
\frac{\mathrm{d}}{\mathrm{d}t} \mathcal{H}^d(\underline{v}(t)) = u_\ell(t) \, 2 \, y_\ell(t) + u_r(t) \, 2 \, y_r(t) \;,
$
since
\begin{center}
$
\int_0^1 e_d(t,x) \, \partial_x e_d(t,x) \, \mathrm{d}x = - \int_0^1 \partial_x e_d(t,x) \, e_d(t,x) \, \mathrm{d}x \;,
$
\end{center}
from $e_d \in H^1_0(0,1)$, implying this integral equals $0$.
\end{proof}

\vspace{-0.5em}
\section{Viscous model}
\label{sec:viscous-model}
\vspace{-0.5em}

Following~\cite{Bonkile_2018}, the VBE reads
{\small
\begin{equation}
\label{eq:pH-viscous-PDE}
\partial_t v(t,x) + \partial_x \frac{v^2(t,x)}{2} = \nu \partial^2_{xx} v(t,x), \quad \forall t \ge 0, x \in (0,1),
\end{equation}}
where $\nu > 0$ is the viscosity parameter.

As can be observed in Section~\ref{sec:simulations}, shocks are not handled by the discrete model so far. A common approach to dealing with shocks is to consider numerical viscosity (see, \textit{e.g.},~\cite{LeVeque_1992}) which smooths the solutions.

More precisely, for any viscosity parameter $\nu>0$, the VBE is parabolic and thus regularizing. In particular, starting from sufficiently regular initial data, the solution remains smooth for all $t>0$, and no shock discontinuity can form. What would correspond to a shock in the inviscid limit is replaced by a smooth internal layer of thickness $O(\nu)$, across which the energy dissipation is entirely accounted for by the viscous term $\nu \int_0^1 (\partial_x v(t,x))^2 \, \mathrm{d}x$. In turn, in the viscous case, there is no separate ``shock dissipation'': the entropy dissipation of the inviscid limit is recovered as the vanishing-viscosity limit of the viscous dissipation.

This motivates the introduction of viscosity into the pH framework, not as an artificial numerical device, but as a physical regularization whose dissipation concentrates near shocks in the inviscid limit. In particular, dissipation here refers to physical energy dissipation, not to a monotonic decay of the Hamiltonian $\mathcal{H}$.

\vspace{-0.5em}
\subsection{Port-Hamiltonian representation}
\vspace{-0.5em}

\begin{lemma}
Along the solutions $v$ of~\eqref{eq:pH-viscous-PDE}, the same Hamiltonian $\mathcal{H}$ defined by~\eqref{eq:Hamiltonian} satisfies the power balance
{\small
\begin{multline*}
\frac{\mathrm{d} \mathcal{H}}{\mathrm{d}t} = 
\frac{v^4(t,0) - v^4(t,1)}{8} 
+ \nu \left[ \frac{v^2(t,x)}{2} \, \partial_x v(t,x) \right]_0^1 \\
- \nu \int_0^1 v(t,x) \, \left( \partial_x v(t,x) \right)^2 \, \mathrm{d}x \;,
\end{multline*}}
and the kinetic energy follows
{\small
$$
\frac{\mathrm{d} E}{\mathrm{d}t} = 
\frac{v^3(t,0) - v^3(t,1)}{3} 
+ \nu \left[ v(t,x) \, \partial_x v(t,x) \right]_0^1
- \nu \int_0^1 \left( \partial_x v(t,x) \right)^2 \, \mathrm{d}x.
$$}
\end{lemma}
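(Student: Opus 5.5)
The argument is a direct computation along smooth solutions, in the spirit of Lemma~\ref{lem:kinetic-energy} and Lemma~\ref{lem:power-balance-cube}. Since $\nu>0$ makes the VBE parabolic, we work with classical solutions (say $v \in \mathcal{C}^1$ in time with values in $H^2(0,1)$). Then no shock appears, and we may differentiate under the integral sign without splitting the domain. The only new ingredient compared to the inviscid case is the viscous term, which we handle by one integration by parts.

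For $\mathcal{H}$, we write
\[
\frac{\mathrm{d}\mathcal{H}}{\mathrm{d}t} = \int_0^1 \frac{v^2}{2}\,\partial_t v\,\mathrm{d}x ,
\]
and substitute $\partial_t v = -\partial_x(\nicefrac{v^2}{2}) + \nu\,\partial^2_{xx} v$ from~\eqref{eq:pH-viscous-PDE}. This splits the integral into two pieces.

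The convective piece is $-\int_0^1 \frac{v^2}{2}\, v\,\partial_x v\,\mathrm{d}x = -\int_0^1 \partial_x(\nicefrac{v^4}{8})\,\mathrm{d}x$. This is exactly the identity already used in Lemma~\ref{lem:power-balance-cube}, and it gives $\frac{v^4(t,0)-v^4(t,1)}{8}$.

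For the viscous piece, we integrate by parts once:
\[
\nu\int_0^1 \frac{v^2}{2}\,\partial^2_{xx}v\,\mathrm{d}x = \nu\left[\frac{v^2}{2}\,\partial_x v\right]_0^1 - \nu\int_0^1 v\,(\partial_x v)^2\,\mathrm{d}x ,
\]
using $\partial_x(\nicefrac{v^2}{2}) = v\,\partial_x v$. Adding the two pieces gives the first formula.

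The kinetic energy is handled identically. We have
\[
\frac{\mathrm{d}E}{\mathrm{d}t} = \int_0^1 v\,\partial_t v\,\mathrm{d}x .
\]
The convective part is $-\int_0^1 v^2\,\partial_x v\,\mathrm{d}x = -\int_0^1\partial_x(\nicefrac{v^3}{3})\,\mathrm{d}x$, which yields $\frac{v^3(t,0)-v^3(t,1)}{3}$ as in Lemma~\ref{lem:kinetic-energy}. Integrating the viscous part by parts gives $\nu[v\,\partial_x v]_0^1 - \nu\int_0^1(\partial_x v)^2\,\mathrm{d}x$, which completes the second formula.

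There is no real obstacle. The only point needing care is justifying the regularity: differentiation under the integral and the traces of $\partial_x v$ at $x=0,1$ both require, for instance, $v(t,\cdot)\in H^2(0,1)$. We would state this assumption explicitly, invoking the parabolic smoothing discussed above, rather than prove it.
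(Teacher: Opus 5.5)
Your proposal is correct and follows essentially the same route as the paper, which just refers back to Lemmas~\ref{lem:kinetic-energy} and~\ref{lem:power-balance-cube}. In both, one differentiates under the integral, substitutes the PDE, uses $\partial_x(\nicefrac{v^4}{8}) = \nicefrac{v^3}{2}\,\partial_x v$ (resp.\ $\partial_x(\nicefrac{v^3}{3}) = v^2\,\partial_x v$) for the convective part, and integrates the viscous term by parts once. Your explicit regularity assumption ($v(t,\cdot)\in H^2(0,1)$, so no shock and no splitting of the domain) is a sensible addition that the paper leaves implicit.
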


\begin{proof}
Similar to Lemmas~\ref{lem:kinetic-energy} and~\ref{lem:power-balance-cube}.
\end{proof}

\begin{remark}
Assume that the velocity profile $v(t,\cdot)$ keeps a constant sign on $(0,1)$. Then the Hamiltonian $\mathcal{H}$ has the same sign, while the viscous term $-\nu \int_0^1 v (\partial_x v)^2$ has the opposite sign. This confirms its dissipative nature, as viscosity tends to drive $\mathcal{H}$ back toward zero.
\end{remark}

To consider the dissipation, let us add a resistive port $(f_r, e_r)$ and the associated boundary ports $(u_\ell^\nu,y_\ell^\nu)$ and $(u_r^\nu,y_r^\nu)$ to~\eqref{eq:pH-system}, as follows
{\small
\begin{equation}
\label{eq:pH-viscous-system}
\left\lbrace
\begin{array}{rcl}
\partial_t v = -\partial_x e - \partial_x e_r\;, &\quad& e = \nicefrac{v^2}{2}\;, \\
f_r = -\partial_x e &\quad& v \, e_r = \nu f_r \;, \\
u_\ell = \nicefrac{\gamma_0 e}{\sqrt{2}}\;, &\quad&
y_\ell = \nicefrac{\gamma_0 e}{\sqrt{2}}\;, \\
u_r = \nicefrac{\gamma_1 e}{\sqrt{2}}\;, &\quad&
y_r = \nicefrac{-\gamma_1 e}{\sqrt{2}}\;, \\
u^\nu_\ell = \gamma_0 e\;, &\quad&
y^\nu_\ell = \gamma_0 e_r\;, \\
u^\nu_r = \gamma_1 e\;, &\quad&
y^\nu_r = -\gamma_1 e_r\;.
\end{array}
\right.
\end{equation}}

\begin{theorem}
\label{th:Dirac-viscous}
Let $J$, $G$ and $K$ be as in Theorem~\ref{th:Dirac}. Let us denote furthermore the resistive:
\begin{itemize}
\item structure operator: {\small$R := \partial_x \in \mathcal{L}(H^1(0,1);L^2(0,1))$};
\item control operator: {\small$G^\nu := \begin{bmatrix} \gamma_0 \\ \gamma_1 \end{bmatrix} \in \mathcal{L}(H^1(0,1);\mathbb{R}^2)$};
\item observation operator: {\small$K^\nu := \begin{bmatrix} \gamma_0 \\ -\gamma_1 \end{bmatrix} \in \mathcal{L}(H^1(0,1);\mathbb{R}^2)$};
\end{itemize}
\begin{multline}
\label{eq:Dirac-structure-viscous}
\text{then,} \; \mathcal{D}^\nu := \Bigg\{ \left( \begin{pmatrix} f \\ f_r \\ f_\partial \\ f^\nu_\partial \end{pmatrix}, \begin{pmatrix} e \\ e_r \\ e_\partial \\ e^\nu_\partial \end{pmatrix} \right) \; \mid \; e \in H^1(0,1), \\
e_r \in H^1(0,1), \, f = J e - R e_r, \, f_r = - R e \\
f_\partial = - K e, \, e_\partial = G e, \, f^\nu_\partial = - K^\nu e_r, \, e^\nu_\partial = G^\nu e \Bigg\} \;,
\end{multline}
is a Dirac structure on $(L^2(0,1) \times L^2(0,1) \times \mathbb{R}^2 \times \mathbb{R}^2)^2$ endowed with the bilinear symmetrized product inherited from the scalar product on $L^2(0,1) \times L^2(0,1) \times \mathbb{R}^2 \times \mathbb{R}^2$.
\end{theorem}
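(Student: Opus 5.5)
The plan follows the proof of Theorem~\ref{th:Dirac}. A Dirac structure is a subspace with $\mathcal{D}^\nu = (\mathcal{D}^\nu)^{\perp}$ for the symmetrized pairing
\begin{multline*}
\int_0^1 (f\widetilde e + \widetilde f e)\,\mathrm{d}x + \int_0^1 (f_r\widetilde e_r + \widetilde f_r e_r)\,\mathrm{d}x \\
+ f_\partial\cdot\widetilde e_\partial + \widetilde f_\partial\cdot e_\partial + f^\nu_\partial\cdot\widetilde e^\nu_\partial + \widetilde f^\nu_\partial\cdot e^\nu_\partial ,
\end{multline*}
so I will prove the two inclusions $\mathcal{D}^\nu\subset(\mathcal{D}^\nu)^\perp$ and $(\mathcal{D}^\nu)^\perp\subset\mathcal{D}^\nu$.

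\textbf{Isotropy.} Take two elements of $\mathcal{D}^\nu$, generated by $(e,e_r)$ and $(\widetilde e,\widetilde e_r)$ in $H^1(0,1)^2$. Substitute $f=-\partial_x e-\partial_x e_r$ and $f_r=-\partial_x e$, and likewise for the tilded element. The six distributed integrals group into three pairs, each of the form $-\int_0^1 \partial_x(ab)\,\mathrm{d}x$:
$$
-\left[e\widetilde e\right]_0^1-\left[e_r\widetilde e\right]_0^1-\left[e\,\widetilde e_r\right]_0^1 .
$$
For the boundary terms, $-Ke\cdot G\widetilde e-K\widetilde e\cdot Ge = e(1)\widetilde e(1)-e(0)\widetilde e(0)=[e\widetilde e]_0^1$, exactly as in Theorem~\ref{th:Dirac}. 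Similarly,
$$
-K^\nu e_r\cdot G^\nu\widetilde e-K^\nu\widetilde e_r\cdot G^\nu e=[e_r\widetilde e]_0^1+[\widetilde e_r e]_0^1 .
$$
Everything cancels, so the pairing vanishes and $\mathcal{D}^\nu\subset(\mathcal{D}^\nu)^\perp$.

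\textbf{Co-isotropy, interior part.} Let $(f,f_r,f_\partial,f^\nu_\partial,e,e_r,e_\partial,e^\nu_\partial)\in(\mathcal{D}^\nu)^\perp$, with a priori only $e,e_r,f,f_r\in L^2(0,1)$. The test pairs $(\widetilde e,\widetilde e_r)$ can be chosen independently, which decouples the equations.
\begin{itemize}
\item With $\widetilde e_r=0$ and $\widetilde e\in C_c^\infty(0,1)$, the condition reads $\int_0^1 f\widetilde e-\int_0^1(e+e_r)\partial_x\widetilde e=0$. Hence $e+e_r\in H^1(0,1)$ and $f=-\partial_x(e+e_r)$ in the distributional sense.
\item With $\widetilde e=0$ and $\widetilde e_r\in C_c^\infty(0,1)$, it reads $\int_0^1 f_r\widetilde e_r-\int_0^1 e\,\partial_x\widetilde e_r=0$. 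Hence $e\in H^1(0,1)$ with $f_r=-\partial_x e=-Re$.
\end{itemize}
It follows that $e_r=(e+e_r)-e\in H^1(0,1)$ and $f=Je-Re_r$. This is the analogue of the Fourier-basis argument in Theorem~\ref{th:Dirac}, but stated through test functions with compact support.

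\textbf{Co-isotropy, boundary part (main obstacle).} Once $e$ and $e_r$ are known to be $H^1$, return to general $\widetilde e,\widetilde e_r\in H^1(0,1)$ and integrate by parts. Only boundary terms remain, forming a linear form in the four traces $\widetilde e(0),\widetilde e(1),\widetilde e_r(0),\widetilde e_r(1)$. Since these traces are arbitrary, each coefficient must vanish, giving four scalar identities linking $f_\partial,e_\partial,f^\nu_\partial,e^\nu_\partial$ to the traces of $e$ and $e_r$. I expect this to be the delicate step. There are eight boundary unknowns but only four trace directions, so I first need to check that the identities really force $f_\partial=-Ke$, $e_\partial=Ge$, $f^\nu_\partial=-K^\nu e_r$ and $e^\nu_\partial=G^\nu e$.
\begin{itemize}
\item I would first write the coefficient of each trace explicitly. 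For example, the coefficient of $\widetilde e(0)$ involves the combination $(f_{\partial,0}-e_{\partial,0})/\sqrt2+f^\nu_{\partial,0}$ together with $e(0)+e_r(0)$.
\item I would then use the structure of these combinations, which mirrors the corresponding step in Theorem~\ref{th:Dirac}, and identify the boundary variables component-wise.
\item If the count leaves freedom, I would isolate it in the complementary directions of the range of $\begin{bmatrix} G & K\end{bmatrix}^\top$, the same non-density issue flagged in the remark after Theorem~\ref{th:Dirac}. The conclusion would then be that the boundary part has to be read in the same sense as in that theorem.
\end{itemize}
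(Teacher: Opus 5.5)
Your isotropy computation is correct, and so is the interior half of the reverse inclusion; the compact-support argument is cleaner than the paper's appeal to a trigonometric basis. The gap is the boundary step you flagged, and it cannot be closed. Your count of eight unknowns against four trace directions is the right diagnosis, and what it shows is that maximality fails.

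Write $f_\partial=(f_{\partial,0},f_{\partial,1})$ and so on. The coefficients of $\widetilde e_r(0),\widetilde e_r(1)$ do give $e^\nu_\partial=G^\nu e$. The coefficients of $\widetilde e(0),\widetilde e(1)$, however, give only
\begin{gather*}
e(0)+e_r(0)+\tfrac{1}{\sqrt2}\bigl(f_{\partial,0}-e_{\partial,0}\bigr)+f^\nu_{\partial,0}=0,\\
-\bigl(e(1)+e_r(1)\bigr)+\tfrac{1}{\sqrt2}\bigl(f_{\partial,1}+e_{\partial,1}\bigr)+f^\nu_{\partial,1}=0 .
\end{gather*}
These are two scalar constraints on the six unknowns in $f_\partial,e_\partial,f^\nu_\partial$. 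Take for instance $f=f_r=0$, $e=e_r=0$, $f^\nu_\partial=e^\nu_\partial=0$ and $f_\partial=e_\partial=(1,0)^\top$. Its pairing with every element of $\mathcal{D}^\nu$ is $\widetilde e(0)/\sqrt2-\widetilde e(0)/\sqrt2=0$. Yet it is not in $\mathcal{D}^\nu$, because $e=0$ forces $f_\partial=-Ke=0$. Hence $(\mathcal{D}^\nu)^\perp$ exceeds $\mathcal{D}^\nu$ by a four-dimensional space of pure boundary vectors. So $\mathcal{D}^\nu$ is isotropic but not maximally isotropic, and it is not a Dirac structure on the stated space.

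Your fallback of reading the boundary part ``in the same sense as'' Theorem~\ref{th:Dirac} does not rescue the argument. The same vector with the $r$- and $\nu$-components removed lies in $\mathcal{D}^\perp\setminus\mathcal{D}$, so that theorem has the same defect. The non-density of the range of $\begin{bmatrix} G & K\end{bmatrix}^\top$ that you invoke is exactly this failure, not a technicality to set aside.

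The paper's proof does not supply the missing step either. It only checks that the boundary terms cancel, which is your isotropy step, and then asserts maximal isotropy. The proof of Theorem~\ref{th:Dirac} claims the boundary identities follow from testing against a basis, and the counterexample above refutes that.

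What can actually be proved is isotropy, i.e., the power balance. A genuine Dirac structure requires shrinking the boundary port space to the size of the trace space. There are four traces $e(0),e(1),e_r(0),e_r(1)$, so the boundary flows and efforts should each lie in $\mathbb{R}^2$, obtained from the traces by an invertible map compatible with the boundary form. Your four trace identities then determine them uniquely.
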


\begin{proof}
The proof follows the same integration-by-parts arguments as Theorem~\ref{th:Dirac}. The additional distributed and boundary port variables generate boundary terms that cancel exactly by construction through the definitions of $G^\nu$ and $K^\nu$, yielding maximal isotropy of $\mathcal{D}^\nu$.
\end{proof}

\begin{remark}
Contrary to the Dirac structure $\mathcal{D}$ of Theorem~\ref{th:Dirac}, $\mathcal{D}^\nu$ defined by~\eqref{eq:Dirac-structure-viscous} does encode the power balance satisfied by the Hamiltonian, including the dissipative phenomena of a shock at the vanishing-viscosity limit.
\end{remark}

\vspace{-1em}
\subsection{Structure-preserving discretization}
\vspace{-0.5em}

Rewriting system~\eqref{eq:pH-viscous-system} in weak formulation and projecting on the basis defined in Section~\ref{sec:discrete-inviscid}, we obtain
{\small
\begin{equation}
\label{eq:VBE-discrete}
\bm{M}
\begin{pmatrix}
\nicefrac{\mathrm{d} \underline{v}(t)}{\mathrm{d}t} \\
\underline{f_r}(t) \\
- y_\ell(t) \\
- y_r(t) \\
- y^\nu_\ell(t) \\
- y^\nu_r(t) \\
\end{pmatrix} 
=
\begin{bmatrix}
D & -R & B_\ell & B_r & 0 & 0 \\
R^\top & 0 & 0 & 0 & B^\nu_\ell & B^\nu_r \\
-B_\ell^\top & 0 & 0 & 0 & 0 & 0 \\
-B_r^\top & 0 & 0 & 0 & 0 & 0 \\
0 & -(B^\nu_\ell)^\top & 0 & 0 & 0 & 0 \\
0 & -(B^\nu_r)^\top & 0 & 0 & 0 & 0 \\
\end{bmatrix}
\begin{pmatrix}
\underline{e}(t) \\
\underline{e_r}(t) \\
u_\ell(t) \\
u_r(t) \\
u^\nu_\ell(t) \\
u^\nu_r(t)
\end{pmatrix}
\end{equation}}
with $\bm{M} := \mathrm{Diag}(M, 2, 2, 1, 1)$,
together with the constitutive relations defined in weak form, \textit{i.e.}, for all $i = 1, \dots, N$
$
\int_0^1 e_d(t,x) \, \varphi^i(x) \, \mathrm{d}x = \int_0^1 \frac{v_d^2(t,x)}{2} \, \varphi^i(x) \, \mathrm{d}x \;,
$
and
{\small
\begin{equation}
\label{eq:CR-viscous}
\int_0^1 v_d(t,x) e_{rd}(t,x) \, \varphi^i(x) \, \mathrm{d}x = \int_0^1 \nu \, f_{rd}(t,x) \, \varphi^i(x) \, \mathrm{d}x \;,
\end{equation}}
where $e_{rd} := \sum_{i=1}^N e_{ri} \varphi^i$ and $f_{rd} := \sum_{i=1}^N f_{ri} \varphi^i$, the new matrices are given by
{\small
$$
\begin{array}{@{}rcl@{}}
\mathcal{M}_N(\mathbb{R}) \ni (R)_{ij} &:=& \int_0^1 \partial_x \varphi^j(x) \, \varphi^i(x) \, \mathrm{d}x \;, \\
\mathbb{R}^N \ni (B^\nu_\ell)_i &:=& -\delta_{0i} \;, \quad
\mathbb{R}^N \ni (B^\nu_r)_i := \delta_{(N+1)i} \;.
\end{array}
$$}

\begin{proposition}
The discrete Hamiltonian $\mathcal{H}^d$ satisfies the discrete power balance
{\small
\begin{multline*}
\frac{\mathrm{d}}{\mathrm{d}t} \mathcal{H}^d(\underline{v}(t)) = - \frac{1}{\nu} \, \int_\Omega v_d(t,x) \, \left( e_{rd}(t,x) \right)^2 \, \mathrm{d}x \\
+ u^\nu_\ell(t) y^\nu_\ell(t) + u^\nu_r(t) y^\nu_r(t) + u_\ell(t) \, 2 \, y_\ell(t) + u_r(t) \, 2 \, y_r(t) \;,
\end{multline*}}
along the solutions of~\eqref{eq:VBE-discrete}.
\end{proposition}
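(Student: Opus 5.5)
Following the proof of the inviscid discrete power balance, we differentiate the discrete Hamiltonian, use the weak form of the dynamics tested against the co-state, and then test the second row of \eqref{eq:VBE-discrete} against the resistive effort. The chain rule gives
$$
\frac{\mathrm{d}}{\mathrm{d}t} \mathcal{H}^d(\underline{v}(t)) = \int_0^1 \frac{v_d^2}{2} \, \partial_t v_d \, \mathrm{d}x = \underline{e}^\top M \frac{\mathrm{d}\underline{v}}{\mathrm{d}t}.
$$
The second equality uses the Galerkin constitutive relation: $\partial_t v_d$ lies in the finite element space, so $\nicefrac{v_d^2}{2}$ may be replaced by its $L^2$-projection $e_d$. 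Equivalently, left-multiply the first block row of \eqref{eq:VBE-discrete} by $\underline{e}^\top$.

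The main step is the energy computation on the full system. Take the Euclidean product of \eqref{eq:VBE-discrete} with the effort vector $(\underline{e}, \underline{e_r}, u_\ell, u_r, u^\nu_\ell, u^\nu_r)$. The structure matrix is skew-symmetric: $D^\top=-D$ by Proposition~\ref{prop:D-skew-sym}, the $(1,2)$ block $-R$ is paired with the $(2,1)$ block $R^\top$, and the boundary blocks are paired with minus their transposes. Hence the right-hand side vanishes. The left-hand side then yields
$$
\underline{e}^\top M \tfrac{\mathrm{d}\underline{v}}{\mathrm{d}t} + \underline{e_r}^\top M \underline{f_r} - 2u_\ell y_\ell - 2u_r y_r - u^\nu_\ell y^\nu_\ell - u^\nu_r y^\nu_r = 0.
$$
This uses the convention that the second diagonal block of $\bm{M}$ is $M$, so that $\underline{f_r}$ is a Galerkin quantity. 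That is how the displayed $\mathrm{Diag}(M,2,2,1,1)$ must be read, with one block implicit. So
$$
\frac{\mathrm{d}}{\mathrm{d}t}\mathcal{H}^d = -\int_0^1 e_{rd} \, f_{rd} \, \mathrm{d}x + \text{boundary supplies}.
$$

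It remains to identify the resistive term. Use the constitutive relation \eqref{eq:CR-viscous}, which holds for every test function $\varphi^i$, and by linearity take $\varphi = e_{rd}$:
$$
\nu \int_0^1 f_{rd} \, e_{rd} \, \mathrm{d}x = \int_0^1 v_d \, e_{rd}^2 \, \mathrm{d}x .
$$
Dividing by $\nu>0$ gives exactly the dissipation term $-\frac{1}{\nu}\int v_d e_{rd}^2$ of the statement.

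The expected obstacle is the bookkeeping of the second block row and of the block structure of $\bm{M}$. One must check that the pairing of $-R$ with $R^\top$ really cancels, which only needs transposition, not $R$ skew. One must also check that the resistive flow is represented through $M$, so that $\underline{e_r}^\top M \underline{f_r} = \int e_{rd} f_{rd}$; otherwise the Galerkin test with $e_{rd}$ would not match. A secondary point is the sign convention of the boundary outputs $-y$ on the left-hand side, which produces the $+u y$ supply terms with the weights $2$ and $1$ coming from $\bm{M}$.
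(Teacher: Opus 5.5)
Your proof is correct and is essentially the paper's argument. The paper tests the first block row of \eqref{eq:VBE-discrete} with $e_d$ (using $\int_0^1 e_d\,\partial_x e_d\,\mathrm{d}x=0$) and then the second row with $e_{rd}$, which is your single pairing with the effort vector split into its two pieces (skewness of $D$, and the $-R$/$R^\top$ transposition), and it concludes with \eqref{eq:CR-viscous} at $\varphi=e_{rd}$ exactly as you do. Your reading $\bm{M}=\mathrm{Diag}(M,M,2,2,1,1)$ is the one the paper's proof implicitly uses, and your Galerkin justification of $\int_0^1 \frac{v_d^2}{2}\,\partial_t v_d\,\mathrm{d}x=\int_0^1 e_d\,\partial_t v_d\,\mathrm{d}x$ is more careful than the paper's shortcut $e_d=\nicefrac{v_d^2}{2}$.
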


\begin{proof}
$$
\begin{array}{rcl}
 && \frac{\mathrm{d}}{\mathrm{d}t} \mathcal{H}^d(\underline{v}(t)) 
= \frac{\mathrm{d}}{\mathrm{d}t} \mathcal{H}(v_d(t,x)) , \\
&=& \int_0^1 \frac{v_d^2(t,x)}{2} \, \partial_t v_d(t,x) \, \mathrm{d}x 
= \int_0^1 e_d(t,x) \, \partial_t v_d(t,x) \, \mathrm{d}x , \\
&=& \int_0^1 \partial_x e_d(t,x) \, e_d(t,x) \, \mathrm{d}x  - \int_0^1 e_d(t,x) \, \partial_x e_{rd}(t,x) \, \mathrm{d}x \\
&& \qquad + u_\ell(t) \, 2 \, y_\ell(t) + u_r(t) \, 2 \, y_r(t) .
\end{array}
$$
As in the inviscid case, one has
$
\int_0^1 \partial_x e_d(t,x) \, e_d(t,x) \, \mathrm{d}x = 0 ,
$
and finally, one computes
$$
\begin{array}{rcl}
 && \int_0^1 e_{d}(t,x) \, \partial_x e_{rd}(t,x) \, \mathrm{d}x \\
&=& \int_0^1 f_{rd}(t,x) \, e_{rd}(t,x) \, \mathrm{d}x 
- u^\nu_\ell(t) y^\nu_\ell(t) - u^\nu_r(t) y^\nu_r(t) , \\
%&=& \frac{1}{\nu} \, \nu \, \int_0^1 f_{rd}(t,x) \, e_d(t,x) \, \mathrm{d}x 
%- u^\nu_\ell(t) y^\nu_\ell(t) - u^\nu_r(t) y^\nu_r(t) , \\
&=& \frac{1}{\nu} \, \int_0^1 v_d(t,x) \, (e_{rd}(t,x))^2 \, \mathrm{d}x 
- u^\nu_\ell(t) y^\nu_\ell(t) - u^\nu_r(t) y^\nu_r(t) ,
\end{array}
$$
where the first equality comes from $f_r = -\partial_x e$ in a variational sense, with test function $\varphi = e_{rd}$ and an integration by parts (second line of~\eqref{eq:VBE-discrete}), and the last one comes from the constitutive relation~\eqref{eq:CR-viscous} with $\varphi = e_{rd}$. Then, the result follows.
\end{proof}

\vspace{-0.5em}
\section{Numerical simulations}
\label{sec:simulations}
\vspace{-0.5em}

The simulations are carried out using SCRIMP package (Simulation and ContRol of Interactions in Multi-Physics), an ongoing project for the numerical simulation of infinite-dimensional pH systems, see~\cite{Ferraro_2024}.%. At its core, SCRIMP builds on two main libraries: GetFEM, an open-source finite element library, and PETSc, the Portable, Extensible Toolkit for Scientific Computation. Mesh generation is typically supported via GMSH, a tool for creating three-dimensional finite element meshes, while visualization of the results is facilitated by ParaView. For further details on the definition and numerical simulation of infinite-dimensional pH systems with SCRIMP, see~\cite{Ferraro_2024}.

As we aim to deal with (near) discontinuity at the discrete level, we choose as initial data: $v(0,x) \!=\! \exp\{-50(x\!-\!\nicefrac{1}{2})^2\}$, and as boundary control: $u_\ell=u_r=u^\nu_\ell=u^\nu_r=0$. The final time is $t_f = 0.4$s, long enough for a (near) shock to appear and small enough to avoid boundary interaction.

The spatial discretization is obtained using $\mathbb{P}^2$ Lagrange finite elements, with a uniform mesh size parameter $h$ from $5.10^{-4}$ to $10^{-2}$. The time discretization is performed using a Crank-Nicolson scheme with an adaptive time step $\textrm{d}t$ initialized at $\alpha h$, $\alpha = 0.5, 1$, or $2$. The viscosity is taken as $\nu = \nicefrac{\beta h}{\alpha}$, with $\beta = 0, 1, 2$, or $5$. Note that the inviscid case is treated when $\beta=0$.

For this section, let us choose $h=5.10^{-4}$, $\alpha=\beta=1$. In Fig.~\ref{fig:inviscid}, we see the velocity profiles of the inviscid case at two times, which show the instability due to the shock development (red vertical line). In Fig.~\ref{fig:viscous}, the profiles for the viscous case show the expected behavior, with a (near) shock that develops and slides to the right. In Fig.~\ref{fig:hamiltonian}, we can appreciate the structure-preserving property of the proposed scheme. Furthermore, the viscous dissipation (in blue) is close to the theoretical inviscid shock dissipation (in green), consistently with the vanishing-viscosity framework. Locally, the viscous layer remains smooth and confined to a small number of elements proportional to $\nu$.

Although the power balance of the kinetic energy at the discrete level is not proved, the same nice-looking behavior shows in Fig.~\ref{fig:energy}, which depicts the physical kinetic energy.

\begin{figure}
\centering
\includegraphics[width=0.49\linewidth]{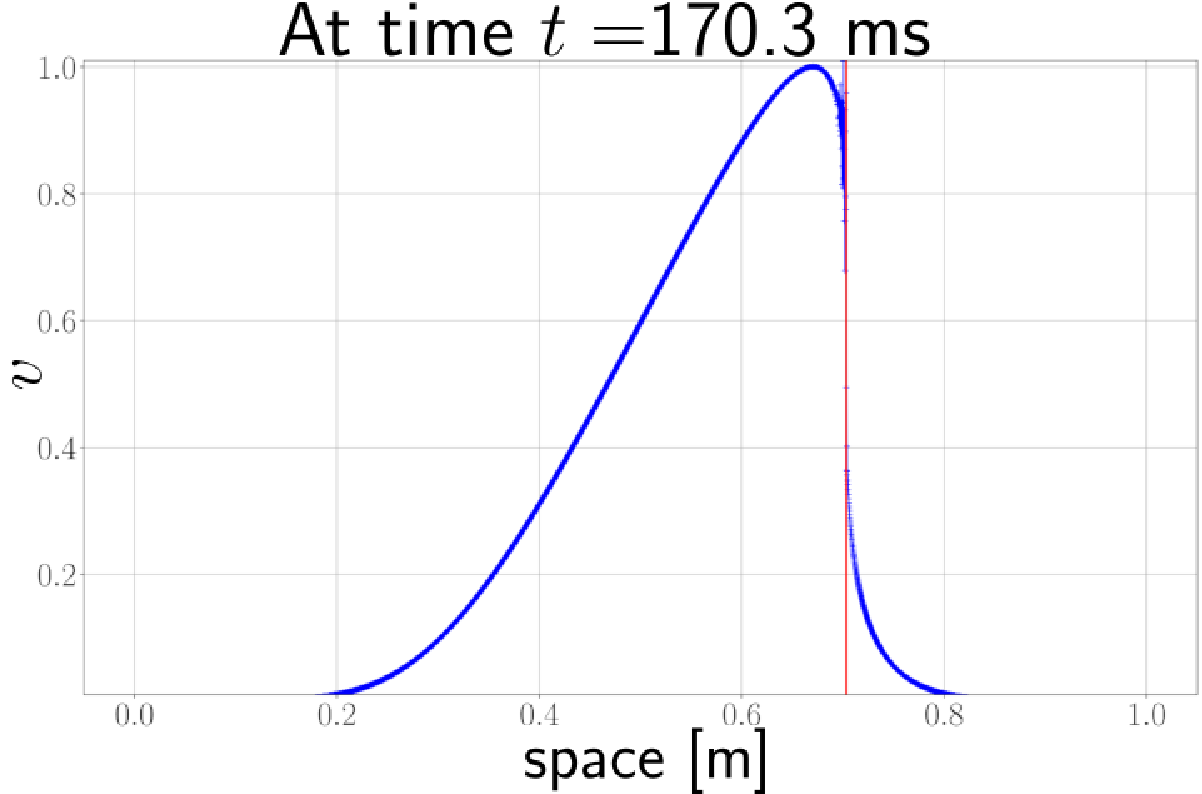}
\includegraphics[width=0.49\linewidth]{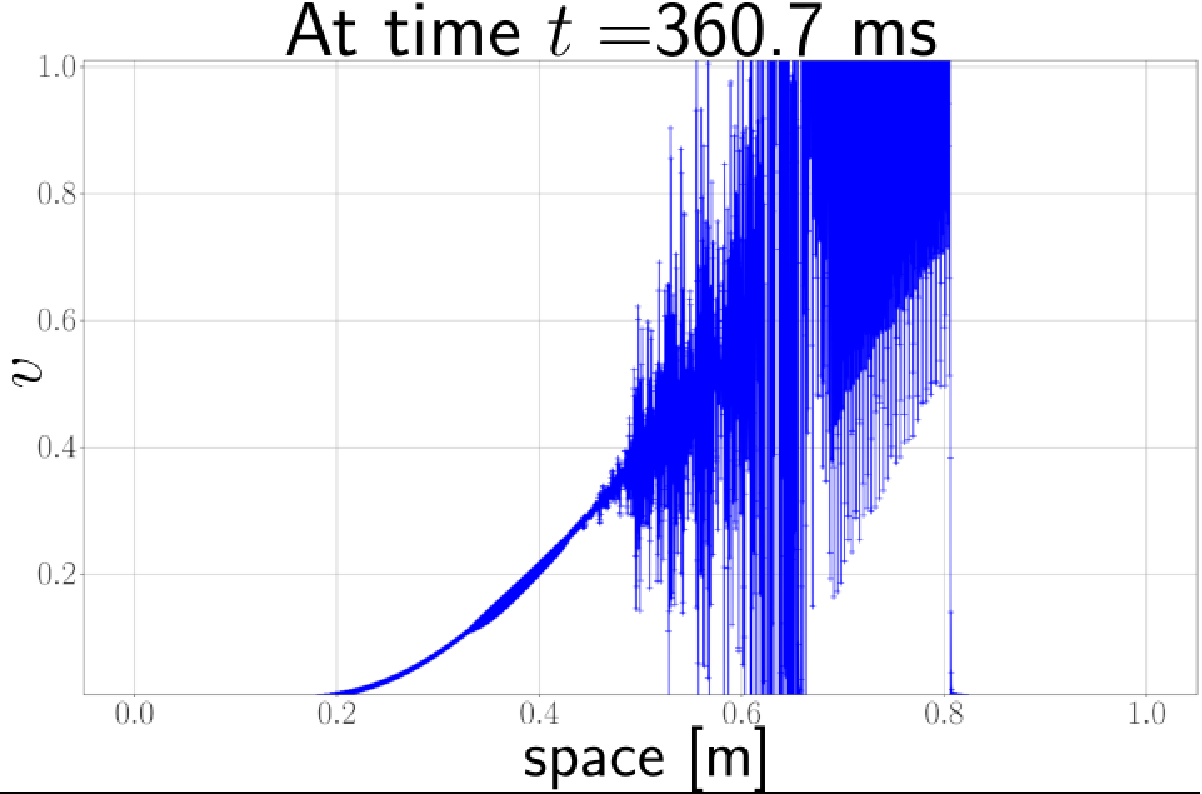}
\caption{\small Velocity profiles of the inviscid case}
\label{fig:inviscid}
\end{figure}

\begin{figure}
\centering
\includegraphics[width=0.49\linewidth]{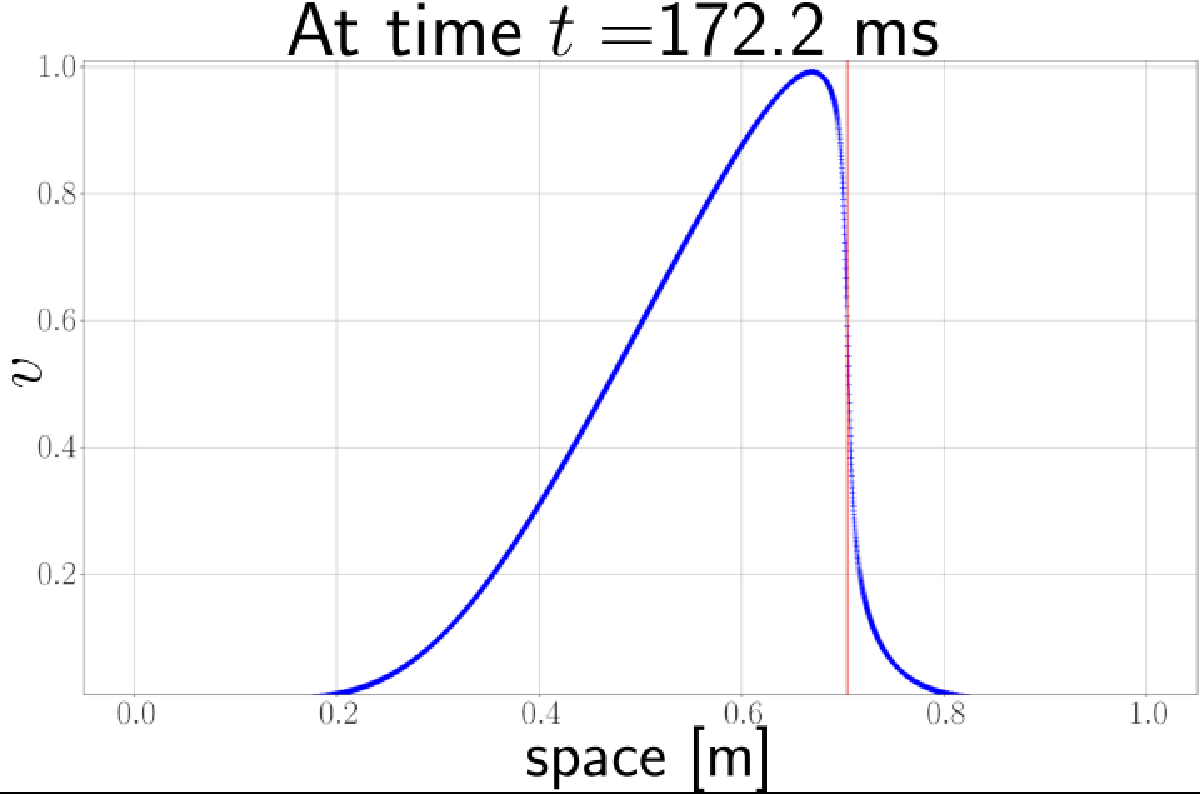}
\includegraphics[width=0.49\linewidth]{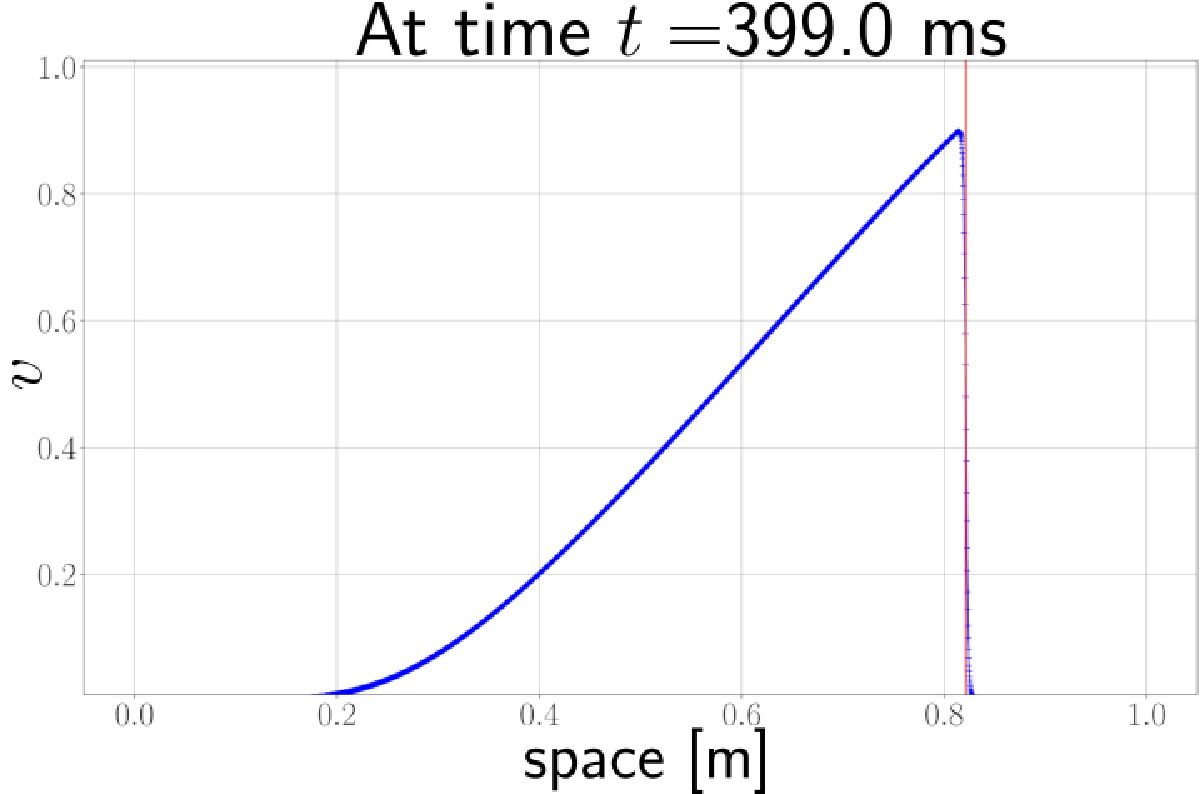}
\caption{\small Velocity profiles of the viscous case}
\label{fig:viscous}
\end{figure}

\begin{figure}
\centering
\includegraphics[width=0.9\linewidth]{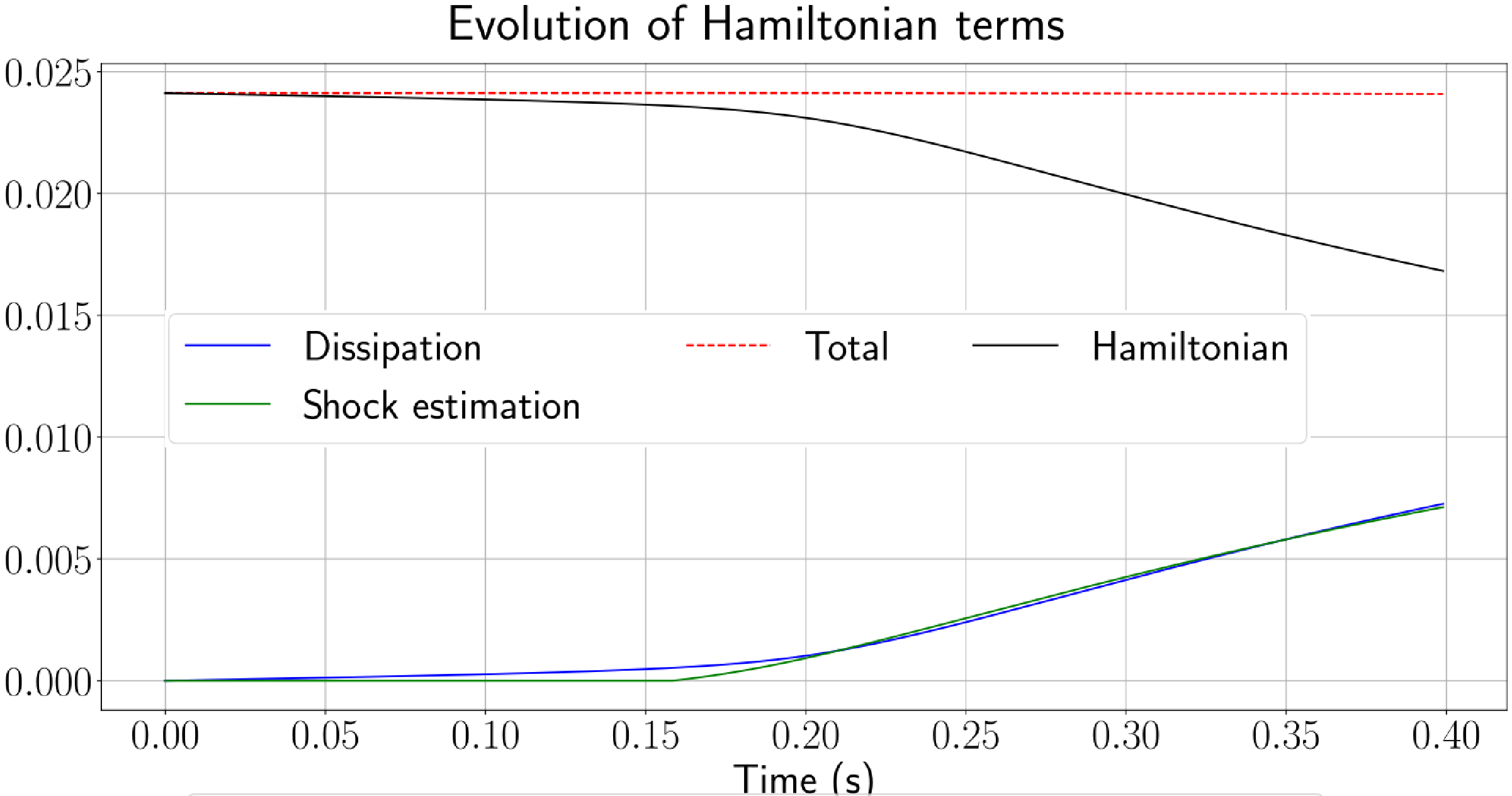}
\caption{\small Hamiltonian and dissipation evolution of the viscous case}
\label{fig:hamiltonian}
\end{figure}

\begin{figure}
\centering
\includegraphics[width=0.9\linewidth]{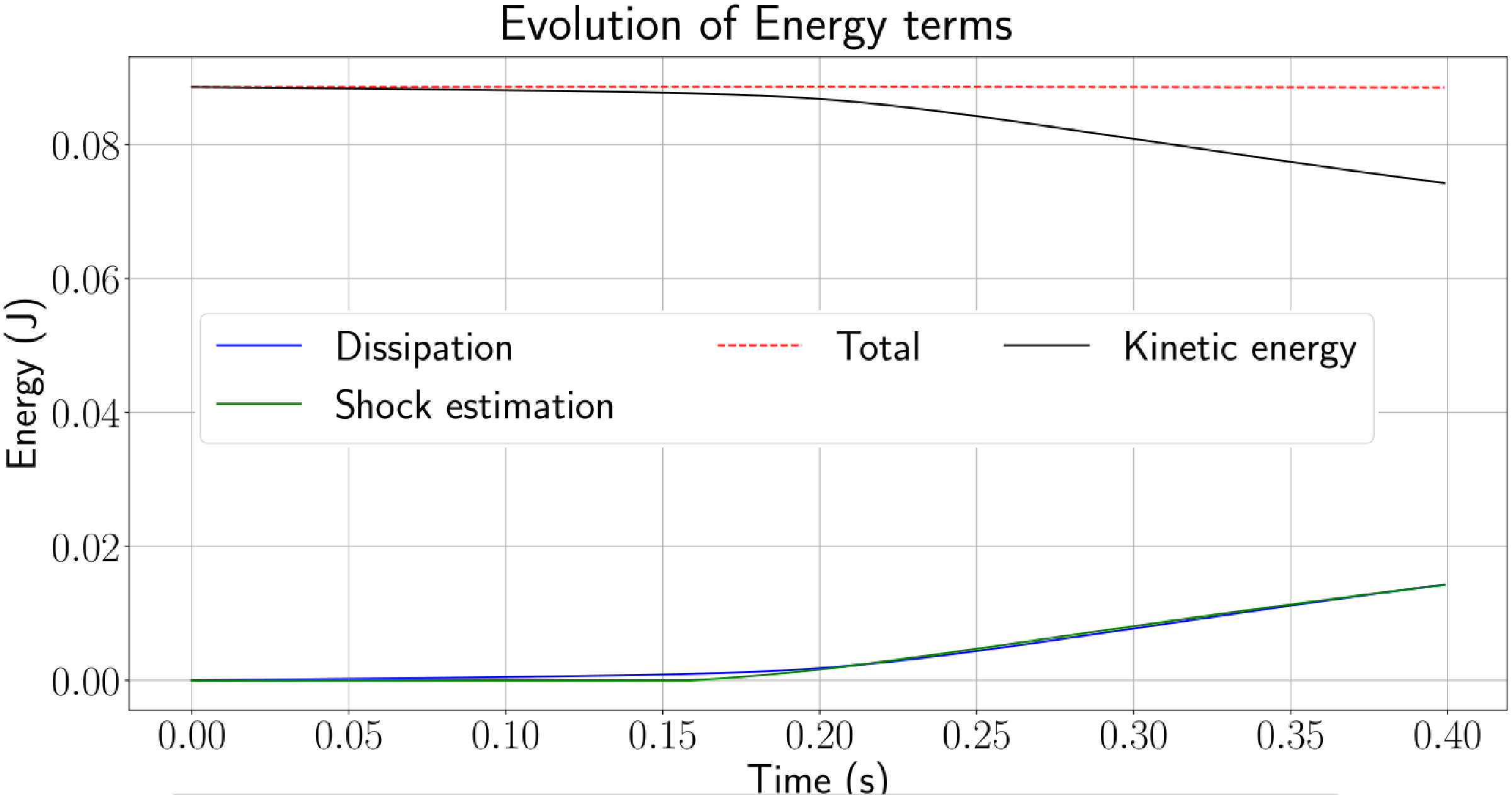}
\caption{\small Kinetic energy and dissipation evolution of the viscous case}
\label{fig:energy}
\end{figure}

The purpose of the numerical experiments is not to outperform classical stabilized schemes, but to illustrate that the proposed structure-preserving discretization maintains consistent power balances at the discrete level, even in regimes where steep gradients develop.

\vspace{-0.5em}
\section{Stability}
\label{sec:stability}
\vspace{-0.5em}

This section does not aim at establishing a theoretical stability result but provides empirical evidence that preserving discrete power balances is a key ingredient for numerical robustness.

We recall that a time adaptive scheme has been used, and some experiments reached the minimal allowed time step before $t\simeq0.4$. The following Tables~\ref{tab:0.5}--\ref{tab:1}--\ref{tab:2} summarize the energetic stability indicators for all tested configurations: the above number is the maximal relative variation $Var$, the below number is the final computed time $t_f$, and the number of time steps $Nb$ is in parentheses, as
{\small
\begin{tabular}{|c|}
\hline
$Var$ \\
$t_f$ ($Nb$) \\
\hline
\end{tabular}}\;.

\setlength{\tabcolsep}{1pt}
\renewcommand{\arraystretch}{2.}
\begin{table}[!ht]
\centering
\begin{tabular}{ |c||c|c|c|c|c| }
\hline 
\diagbox[width=1em,height=1.75em]{\;$\smash[t]{h}$}{$\smash[b]{\beta}$\;} & 5.0e-04 & 1.0e-03 & 2.5e-03 & 5.0e-03 & 1.0e-02 \\
\hline
\hline
0.0 & $\atop{5.141e-02}{0.355 (257)}$ & $\atop{2.874e-02}{0.400 (300)}$ & $\atop{2.891e-02}{0.400 (297)}$ & $\atop{2.895e-02}{0.399 (289)}$ & $\atop{6.621e-02}{0.399 (283)}$ \\
\hline
1.0 & $\atop{1.322e-03}{0.399 (247)}$ & $\atop{8.993e-04}{0.399 (160)}$ & $\atop{4.662e-04}{0.397 (96)}$ & $\atop{2.185e-04}{0.394 (69)}$ & $\atop{1.615e-05}{0.399 (107)}$ \\
\hline
2.0 & $\atop{8.957e-04}{0.398 (159)}$ & $\atop{5.581e-04}{0.397 (107)}$ & $\atop{2.193e-04}{0.397 (70)}$ & $\atop{4.616e-05}{0.399 (55)}$ & $\atop{1.911e-04}{0.017 (6)}$ \\
\hline
5.0 & $\atop{4.653e-04}{0.397 (95)}$ & $\atop{2.171e-04}{0.394 (69)}$ & $\atop{1.937e-05}{0.400 (53)}$ & $\atop{5.957e-05}{0.400 (90)}$ & $\atop{5.589e-03}{0.399 (243)}$ \\
\hline
\end{tabular}
\caption{\small Maximal relative variation of the Hamiltonian + dissipation (above number), and the final time and number of time steps (below numbers) for $\alpha = 0.5$}
\label{tab:0.5}

\begin{tabular}{ |c||c|c|c|c|c| }
\hline 
\diagbox[width=1em,height=1.75em]{\;$\smash[t]{h}$}{$\smash[b]{\beta}$\;} & 5.0e-04 & 1.0e-03 & 2.5e-03 & 5.0e-03 & 1.0e-02 \\
\hline
\hline
0.0 & $\atop{2.630e-02}{0.361 (263)}$ & $\atop{2.801e-02}{0.400 (301)}$ & $\atop{2.884e-02}{0.399 (295)}$ & $\atop{2.895e-02}{0.399 (288)}$ & $\atop{6.454e-02}{0.399 (282)}$ \\
\hline
1.0 & $\atop{1.526e-03}{0.399 (232)}$ & $\atop{1.321e-03}{0.399 (248)}$ & $\atop{7.822e-04}{0.399 (140)}$ & $\atop{4.701e-04}{0.400 (95)}$ & $\atop{2.065e-04}{0.395 (67)}$ \\
\hline
2.0 & $\atop{1.325e-03}{0.400 (248)}$ & $\atop{9.006e-04}{0.400 (161)}$ & $\atop{4.692e-04}{0.399 (96)}$ & $\atop{2.171e-04}{0.395 (68)}$ & $\atop{4.141e-05}{0.400 (140)}$ \\
\hline
5.0 & $\atop{7.815e-04}{0.399 (140)}$ & $\atop{4.648e-04}{0.396 (96)}$ & $\atop{1.528e-04}{0.391 (63)}$ & $\atop{2.336e-05}{0.391 (51)}$ & $\atop{1.461e-04}{0.400 (199)}$ \\
\hline
\end{tabular}
\caption{\small Maximal relative variation of the Hamiltonian + dissipation (above number), and the final time and number of time steps (below numbers) for $\alpha = 1$}
\label{tab:1}

\begin{tabular}{ |c||c|c|c|c|c| }
\hline 
\diagbox[width=1em,height=1.75em]{\;$\smash[t]{h}$}{$\smash[b]{\beta}$\;} & 5.0e-04 & 1.0e-03 & 2.5e-03 & 5.0e-03 & 1.0e-02 \\
\hline
\hline
0.0 & $\atop{2.682e-02}{0.351 (254)}$ & $\atop{2.741e-02}{0.400 (300)}$ & $\atop{2.884e-02}{0.399 (294)}$ & $\atop{2.956e-02}{0.399 (287)}$ & $\atop{6.823e-02}{0.399 (279)}$ \\
\hline
1.0 & $\atop{1.022e-02}{0.400 (260)}$ & $\atop{1.531e-03}{0.400 (233)}$ & $\atop{1.177e-03}{0.400 (214)}$ & $\atop{7.824e-04}{0.399 (138)}$ & $\atop{4.549e-04}{0.397 (91)}$ \\
\hline
2.0 & $\atop{1.525e-03}{0.399 (233)}$ & $\atop{1.323e-03}{0.399 (248)}$ & $\atop{7.824e-04}{0.399 (139)}$ & $\atop{4.663e-04}{0.400 (94)}$ & $\atop{1.398e-04}{0.395 (65)}$ \\
\hline
5.0 & $\atop{1.174e-03}{0.399 (215)}$ & $\atop{7.801e-04}{0.398 (140)}$ & $\atop{3.815e-04}{0.400 (85)}$ & $\atop{1.374e-04}{0.393 (61)}$ & $\atop{2.827e-04}{0.399 (77)}$ \\
\hline
\end{tabular}
\caption{\small Maximal relative variation of the Hamiltonian + dissipation (above number), and the final time and number of time steps (below numbers) for $\alpha = 2$}
\label{tab:2}
\end{table}

\vspace{-1em}
Across Tables~\ref{tab:0.5}--\ref{tab:1}--\ref{tab:2}, the inviscid configuration ($\beta=0$) exhibits comparatively large maximal relative variations (typically a few $10^{-2}$, up to $\approx 6\text{–}7\times 10^{-2}$), which indicates that the simulation becomes energetically unreliable once steep gradients develop. Introducing viscosity ($\beta\ge 1$) reduces this variation by one to several orders of magnitude in most cases (often $10^{-3}$, $10^{-4}$, and sometimes down to $10^{-5}$), confirming the stabilizing role of the dissipative extension. Importantly, the dependence on $h$ is consistent with the chosen scaling $\nu=\beta \nicefrac{h}{\alpha}$: for fixed $(\alpha,\beta)$, larger $h$ implies larger $\nu$ and generally smaller relative variations, whereas the finest meshes (small $h$, hence smaller $\nu$) can display larger variations (\textit{e.g.}, $\alpha=2,\beta=1,h=5\times10^{-4}$). Finally, a few parameter combinations lead to premature termination (very small final time) when the adaptive time step reaches its minimum, highlighting a practical stability boundary in $(\alpha,\beta,h)$ rather than a monotone trend with refinement. Nevertheless, it seems that larger values of $\alpha$ tend to reduce the number of time steps but may require stronger viscosity to maintain stability.

\FloatBarrier

\vspace{-0.5em}
\section{Conclusion}
\label{sec:conclusion}
\vspace{-0.5em}

We proposed a port-Hamiltonian formulation of the Burgers' equation based on a non-standard Hamiltonian, which allows the non-linear convective term to be embedded entirely in the constitutive relation while preserving a constant Dirac structure. This choice leads to a clear separation between the geometric interconnection structure and the non-linear dynamics, both at the continuous and discrete levels. A structure-preserving finite element discretization was then derived, relying on a weak enforcement of the constitutive relation, which ensures exact discrete power balances. The framework was further extended to the viscous Burgers' equation by introducing a resistive port and the associated boundary ports. This extension preserves the port-Hamiltonian structure while correctly accounting for viscous effects in the power balance.

Numerical simulations confirm the theoretical developments. In the viscous case, the dissipation is entirely governed by the viscous term, and the numerical results illustrate the stabilizing role of the proposed viscous regularization, while giving rise to a dissipation that is close to the decay of the inviscid limit. A systematic parametric study provides empirical evidence that the preservation of discrete power balances plays a central role in the numerical robustness of the method, even in regimes where classical discretizations are prone to instabilities.

Future work will focus on extending the proposed approach to higher-dimensional settings and on a deeper comparison with standard stabilized schemes, in particular for systems involving shocks or sharp internal layers.

%A conclusion section is not required. Although a conclusion may review
%the main points of the paper, do not replicate the abstract as the
%conclusion. A conclusion might elaborate on the importance of the work
%or suggest applications and extensions.

%\begin{ack}
%Place acknowledgments here.
%\end{ack}

%{\tiny\vspace{-0.25cm}
%\section*{DECLARATION OF GENERATIVE AI AND AI-ASSISTED TECHNOLOGIES IN THE WRITING PROCESS}\vspace{-0.5cm}
%The authors used LanguageTool to enhance the English wording and grammar. After using this tool, they reviewed and edited the content and took full responsibility for the content of the publication.
%\vspace{-0.25cm}}

{\small
\bibliographystyle{plain}
\bibliography{biblio}

@article{Bonkile_2018,
  title = {{A systematic literature review of Burgers' equation with recent advances}},
  volume = {90},
  ISSN = {0973-7111},
  DOI = {10.1007/s12043-018-1559-4},
  number = {6},
  journal = {Pramana},
  publisher = {Springer Science and Business Media LLC},
  author = {Bonkile, Mayur P and Awasthi, Ashish and Lakshmi, C and Mukundan, Vijitha and Aswin, V S},
  year = {2018},
}

@inbook{Burgers_1995,
  title = {{Mathematical Examples Illustrating Relations Occurring in the Theory of Turbulent Fluid Motion}},
  ISBN = {9789401101950},
  DOI = {10.1007/978-94-011-0195-0\_10},
  booktitle = {Selected Papers of J. M. Burgers},
  publisher = {Springer Netherlands},
  author = {Burgers, J. M.},
  year = {1995},
  pages = {281--334}
}

@book{Enflo_2004,
  author = {Bengt O. Enflo and Claes M. Hedberg},
  title = {{Theory of Nonlinear Acoustics in Fluids}},
  ISBN = {1402005725},
  DOI = {10.1007/0-306-48419-6},
  journal = {Fluid Mechanics and Its Applications},
  publisher = {Kluwer Academic Publishers},
  year = {2004}
}

@article{Ferraro_2024,
  title = {{Simulation and control of interactions in multi-physics, a Python package for port-Hamiltonian systems}},
  volume = {58},
  ISSN = {2405-8963},
  DOI = {10.1016/j.ifacol.2024.08.267},
  number = {6},
  journal = {IFAC-PapersOnLine},
  publisher = {Elsevier BV},
  author = {Ferraro, Giuseppe and Fournié, Michel and Haine, Ghislain},
  year = {2024},
  pages = {119--124}
}

@article{Kurula_2010,
  title = {{Dirac structures and their composition on Hilbert spaces}},
  volume = {372},
  ISSN = {0022-247X},
  DOI = {10.1016/j.jmaa.2010.07.004},
  number = {2},
  journal = {Journal of Mathematical Analysis and Applications},
  publisher = {Elsevier BV},
  author = {Kurula, Mikael and Zwart, Hans and van der Schaft, Arjan and Behrndt, Jussi},
  year = {2010},
  pages = {402--422}
}

@book{LeVeque_1992,
  title = {{Numerical Methods for Conservation Laws}},
  ISBN = {9783034886291},
  DOI = {10.1007/978-3-0348-8629-1},
  publisher = {Birkhäuser Basel},
  author = {LeVeque, Randall J.},
  year = {1992}
}

@article{Rashad_2020,
  title = {{Twenty years of distributed port-Hamiltonian systems: a literature review}},
  volume = {37},
  ISSN = {1471-6887},
  DOI = {10.1093/imamci/dnaa018},
  number = {4},
  journal = {IMA Journal of Mathematical Control and Information},
  publisher = {Oxford University Press (OUP)},
  author = {Rashad, Ramy and Califano, Federico and van der Schaft, Arjan J and Stramigioli, Stefano},
  year = {2020},
  pages = {1400--1422}
}

@article{van_der_Schaft_2014,
  title = {{Port-Hamiltonian Systems Theory: An Introductory Overview}},
  volume = {1},
  ISSN = {2325-6826},
  DOI = {10.1561/2600000002},
  number = {2–3},
  journal = {Foundations and Trends\textsuperscript{\tiny\textregistered} in Systems and Control},
  publisher = {Emerald},
  author = {van der Schaft, Arjan and Jeltsema, Dimitri},
  year = {2014},
  pages = {173--378}
}
}

%\appendix
%\section{}

\end{document}